\newtheorem{theorem}{Theorem}
\newtheorem{corollary}{Corollary}
\newtheorem{lemma}{Lemma}
\newtheorem{proposition}{Proposition}
\newtheorem{definition}{Definition}
\newtheorem{remark}{Remark}
\newcommand{\D}{\mathbb{D}}
\newcommand{\eit}{\mathrm{e}^{i\theta}}
\newcommand{\eps}{\varepsilon}
\newcommand{\supp}{\mathrm{supp\;}}
\newcommand{\dist}{\mathrm{dist\,}}
\newcommand{\card}{\mathrm{Card\;}}
\begin{document}

\title{Wepable Inner Functions}

\author{Alexander Borichev, Artur Nicolau
and
Pascal J. Thomas}

\address{A. Borichev: Aix Marseille Universit\'e\\
CNRS\\ Centrale Marseille\\ I2M\\ 13453 Marseille\\ France}
\email{alexander.borichev@math.cnrs.fr}

\address{A. Nicolau: Departament de Matem\'atiques\\
Universitat Aut\`onoma de Barcelona\\ 08193 Barcelona\\ Spain}
\email{artur@mat.uab.cat}

\address{P.J. Thomas: Universit\'e de Toulouse\\ UPS, INSA, UT1, UTM \\
Institut de Math\'e\-ma\-tiques de Toulouse\\
F-31062 Toulouse, France} 
\email{pascal.thomas@math.univ-toulouse.fr}

\keywords{Inner Functions, Weak Embedding Property, Carleson measure, Entropy, Porosity, Atomic Measures}

\subjclass[2000]{30H05, 30J05, 30J15, 30H80}

\thanks{Second author was supported in part by the MINECO grants MTM2011-24606, MTM2014-51824-P and by 2014SGR 75, Generalitat de Catalunya}

\date{\today}

\maketitle

\hfill{To Nikolai Nikolski on occasion of his birthday}

\begin{abstract}
Following Gorkin, Mortini, and Nikolski, we say that an inner function $I$ in $H^\infty(\D)$
has the WEP property if its modulus at a point $z$ is bounded from below by a function
of the distance from $z$ to the zero set of $I$.  This is equivalent to a number of
properties, and we establish some consequences of this for $H^\infty/IH^\infty$.

The bulk of the paper is devoted to \emph{wepable} functions, i.e. those inner functions
which can be made WEP after multiplication by a suitable Blaschke product. We prove that
a closed subset $E$ of the unit circle is of finite entropy (i.e. is a Beurling--Carleson
set) if and only if any singular measure supported on $E$ gives rise to a wepable singular
inner function.  As a corollary, we see that singular measures which spread their mass
too evenly cannot give rise to wepable singular inner functions.
Furthermore, we prove that the stronger property of porosity of $E$
is equivalent to a stronger form of wepability (\emph{easy wepability}) for the singular inner functions with support in $E$. 
Finally, we find out the critical decay rate of masses of atomic measures (with no restrictions on support) 
guaranteeing that the corresponding singular inner functions are easily wepable.
\end{abstract}

\section{Introduction}

\subsection{Background.}

Let $H^\infty=H^\infty(\D)$ be the algebra of bounded analytic functions on the unit disc $\mathbb D$ with the norm $\|f \|_\infty = \sup_{z \in \mathbb D} |f(z)|$. A function $I \in H^\infty$ is called
\emph{inner} if it has radial limits of modulus $1$ at almost every point of the unit circle.

Any inner function $I$ factors as $I=BS$ where $B$ is  a Blaschke product and $S$ is a
\emph{singular} inner function, that is, an inner function without zeros in $\mathbb D$. 

A Blaschke product $B$ is called an \emph{interpolating} Blaschke product if its zero set $\Lambda=(z_n)_n$ forms an interpolating sequence for $H^\infty$, that is $H^\infty |\Lambda=\ell^\infty|\Lambda$.
Let $\rho(z,w)$ be the pseudohyperbolic distance between the points $z$ and $w$ in the unit disc $\mathbb{D}$ defined as
$$
\rho(z,w)= \left| \frac{z-w}{1- \overline{w}z} \right|, \qquad  z,w \in \mathbb D.
$$
A celebrated result of Carleson says that this holds if and only if
$\inf_{n \neq m} \rho(z_n , z_m) >0$ and
\begin{equation}
\label{1}
    \sup_{z\in\mathbb D}\sum_n \frac{(1- |z_n|^2) (1- |z|^2)}{|1 - \overline{z_n} z |^2} < \infty.  %\tag{$1$}
\end{equation}
  
It was also proved by Carleson that \eqref{1} is equivalent to the embedding $H^1 \subset L^1 (d \mu)$, where $H^1$ is the standard Hardy space and 
$d \mu = \sum_n (1-|z_n|^2) \delta_{z_n}$, 
$\delta_{z_n}$ being the point mass at $z_n$. In other words, \eqref{1} holds if and only if there exists a constant $C>0$ such that
$
\sum_n (1-|z_n|^2) |f(z_n) | \le C \|f \|_1
$
, for any function $f$ in the Hardy space $H^1$ of the analytic functions in $\mathbb D$ for which
$$
\|f\|_1 = \sup_{0<r<1} \int_0^{2 \pi} |f(r e^{it})| dt < \infty.
$$

It is well known that a Blaschke product $B$ is
an interpolating Blaschke product if and only
if there exists a constant $C=C(B)>0$ such that
\begin{equation}
\label{distzero}
|B(z)| > C \rho(z, Z(B)),
\end{equation}
where $Z(B)$ denotes the zero set of $B$
(see the monographs \cite{Ni1}, \cite[p. 217]{Ni} or \cite{Va}).
This fact easily extends to general inner functions.

\subsection{Weak Embedding Property.}

In 2008, Gorkin, Mortini, and Nikolski \cite{GoMoNi} introduced the following
new class of inner functions. An inner function $I$ satisfies the \emph{Weak Embedding Property (WEP)},
a weaker version of \eqref{distzero},
if for any $\varepsilon >0$ one has
$$
\eta_I(\eps):= \inf \{|I(z)| : \rho(z, Z(I)) > \varepsilon \} >0.
$$

A Blaschke product $I$ with zeros $(z_n)_n$ satisfies the WEP if and only if for every $\varepsilon>0$, 
$$
\sup_{z \in \mathbb D, \,  \inf_n \rho (z, z_n) > \varepsilon} \left\{ \sum \frac{(1-|z_n|^2)(1- |z|^2)}{|1 - \overline{z_n} z|^2} \right\}< \infty \, ,
$$
which is a weakening  of the Carleson embedding property \eqref{1}.

Finite products of interpolating Blaschke products satisfy the WEP
with $\eta_I(\eps) \succeq \eps^N$ and in fact,
a Blaschke product $B$ is the product of $N$ interpolating Blaschke products
if and only if there exists a constant $C=C(B)>0$ such that $|B(z)| > C \rho (z, Z(B))^N$ for any $z \in \mathbb D$ \cite{GoMo}.

However there are other inner functions that satisfy the WEP. In \cite{GoMoNi}, an explicit example
was presented of a Blaschke product satisfying the WEP which cannot factor into a finite product of interpolating Blaschke products. This example was extended and complemented in \cite{NiVa}.
A different class of examples has been  given in \cite{Bo} showing that for every strictly increasing function $\psi : (0,1) \rightarrow (0,1)$ there exists a Blaschke product $B$ satisfying the WEP such that $\eta_B(\eps)  =o ( \psi (\varepsilon))$ as $\varepsilon \to 0$.

\subsection{Operator Theory motivations.}

Given an inner function $I$ consider the quotient algebra $H^\infty / I H^\infty$. The zeros $Z(I)$ of $I$ in $\mathbb D$ are naturally embedded in the maximal ideal space $\mathfrak M$ of $H^\infty / I H^\infty$. It is proved in \cite{GoMoNi} that $I$ satisfies the WEP if and only if $H^\infty / I H^\infty$ has no corona, that is, $Z(I)$ is dense in $\mathfrak M$.

Another condition shown to be equivalent to the WEP in \cite{GoMoNi} is the norm controlled inversion property which says that for any $\varepsilon >0$ , there exists $m(\varepsilon) >0$ such that if $f \in H^\infty $, $\|f\|_{H^\infty} = 1$ and $\inf\{|f(z)|: z \in Z(I) \} > \varepsilon >0$, then $f$ is invertible in $H^\infty / I H^\infty$ and $\|1/f \|_{H^\infty / I H^\infty} %= \inf \{\| 1/f + Ih\|_{L^\infty (\partial \mathbb D)} 
 \le m (\varepsilon)$.

Consider a vector-valued version of this:
for $f:=(f_1,\dots,f_n) \in (H^\infty)^n$, let
$\|f\|^2_{\infty,n}:= \sup_{z\in \D} \sum_{j=1}^n|f_j(z)|^2$
and for $I$ inner,
$$
\chi_I(f) :=
\inf\{ \|g\|_{\infty,n} : \exists h\in H^\infty :
\sum_{j=1}^n g_j f_j + h I \equiv 1\}.
$$
This is like a ``Corona constant" for the $n$-tuple $f$ in the quotient space
$H^\infty/IH^\infty$.

Following Gorkin, Mortini and Nikolski, for $\delta \in (0,1)$, $n\ge 1$, we define
$$
c_n (\delta, I) := \sup \left\{ \chi_I(f) : \delta^2 \le \inf_{\lambda \in Z(I)} \sum_{j=1}^n |f_j(\lambda)|^2,
\|f\|_{\infty,n} \le 1 \right\},
$$
which is a decreasing function of $\delta$, and
$$
\delta_n(I):= \inf \left\{ \delta : c_n (\delta, I) <\infty \right\}.
$$
It turns out that these values do not depend on $n$.

\begin{proposition}
\label{deltaseq}
For any $n\ge 1$, $\delta_n(I)= \tilde \delta(I) := \inf \{ \eps : \eta_I (\eps) >0 \}$.
\end{proposition}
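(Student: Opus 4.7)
My plan is to prove the two inequalities $\delta_n(I) \leq \tilde\delta(I)$ and $\delta_n(I) \geq \tilde\delta(I)$ separately. For the upper bound, I will verify the corona condition for the tuple $(f_1,\ldots,f_n,I)$ and apply Carleson's theorem. Fix $\delta > \tilde\delta(I)$ and an admissible $f$ with $\|f\|_{\infty,n}\leq 1$ and $\sum_j|f_j(\lambda)|^2\geq\delta^2$ on $Z(I)$. Choose $r$ with $\tilde\delta(I)<r<\delta$, so $\eta_I(r)>0$. On $\{z:\rho(z,Z(I))>r\}$ one has $|I(z)|\geq\eta_I(r)$. On the complementary set, pick $\lambda\in Z(I)$ with $\rho(z,\lambda)\leq r$ and apply Schwarz--Pick to the scalar function $z\mapsto \sum_j \overline{f_j(\lambda)}f_j(z)/\|f(\lambda)\|$: this is a holomorphic self-map of $\overline{\D}$ taking value $\|f(\lambda)\|\geq\delta$ at $\lambda$, and Cauchy--Schwarz then yields $\|f(z)\|\geq(\delta-r)/(1-r\delta)$, crucially a bound independent of $n$. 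Hence $\sum_j|f_j|^2+|I|^2$ is uniformly bounded from below in $\D$, so Carleson's corona theorem produces $g,h$ solving $\sum_j g_jf_j+hI=1$ with $\|g\|_{\infty,n}$ controlled by $\delta$, $r$, and $\eta_I(r)$ only. Thus $c_n(\delta,I)<\infty$, and letting $\delta\downarrow\tilde\delta(I)$ gives $\delta_n(I)\leq\tilde\delta(I)$.

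For the lower bound, note that $c_n(\delta,I)\geq c_1(\delta,I)$ by padding a scalar test function with zeros, so it suffices to treat $n=1$. Fix $\delta<\tilde\delta(I)$, pick $\delta'\in(\delta,\tilde\delta(I))$ so that $\eta_I(\delta')=0$, and choose $z_k\in\D$ with $\rho(z_k,Z(I))>\delta'$ and $|I(z_k)|\to 0$. Set $f_k(z)=(z-z_k)/(1-\overline{z_k}z)$: then $|f_k(\lambda)|=\rho(z_k,\lambda)\geq\delta'>\delta$ on $Z(I)$ and $\|f_k\|_\infty=1$, so $f_k$ is an admissible test function. Suppose for contradiction that $\chi_I(f_k)\leq C$ along a subsequence, with $g_kf_k+h_kI=1$ and $\|g_k\|_\infty\leq C$. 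Then $h_kI=1-g_kf_k$ has sup norm at most $1+C$, and because $I$ is inner the a.e.\ boundary identity $|h_k(\eit)|=|(h_kI)(\eit)|$ forces $\|h_k\|_\infty=\|h_kI\|_\infty\leq 1+C$. But evaluating at $z_k$, where $f_k(z_k)=0$, gives $h_k(z_k)I(z_k)=1$, hence $|h_k(z_k)|=1/|I(z_k)|\to\infty$, contradicting the uniform bound on $\|h_k\|_\infty$. Therefore $\chi_I(f_k)\to\infty$, $c_1(\delta,I)=\infty$, and $\delta_1(I)\geq\tilde\delta(I)$.

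The principal obstacle I foresee is maintaining the upper-bound Schwarz--Pick estimate uniformly in $n$: a naive componentwise use of $|f_j(z)-f_j(\lambda)|\leq 2\rho(z,\lambda)$ combined via the triangle inequality would introduce an unwanted $\sqrt{n}$ factor and only yield a bound of order $\sqrt{n}\,\tilde\delta(I)$. Projecting $f(z)$ onto the unit vector $f(\lambda)/\|f(\lambda)\|$ to obtain a genuine scalar self-map of $\overline{\D}$ before invoking Schwarz--Pick is the key move that eliminates the $n$-dependence and makes transparent the fact that $\delta_n(I)$ does not depend on $n$.
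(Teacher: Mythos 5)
Your proof is correct. The upper bound $\delta_n(I)\le\tilde\delta(I)$ is essentially the paper's Lemma on that inequality: project $f(z)$ onto the unit vector in the direction of $f(\lambda)$, apply Schwarz--Pick to that scalar self-map of $\D$ to get the $n$-independent lower bound $(\delta-r)/(1-r\delta)$ on $\|f(z)\|$ when $\rho(z,Z(I))\le r$, combine with $|I(z)|\ge\eta_I(r)$ elsewhere, and invoke Carleson's Corona Theorem; this is the same argument, including the key observation that the projection trick avoids the spurious $\sqrt n$. Your lower bound, however, takes a genuinely different route. The paper builds a single test function: from a sequence $(\zeta_j)$ with $\rho(\zeta_j,Z(I))\ge\eps_1$ and $|I(\zeta_j)|\to 0$ it extracts a subsequence $(\xi_j)$ so sparse that the Blaschke product $B$ with these zeros satisfies $|B(z)|>\eps_0$ whenever $\rho(z,Z(B))>\eps_1$ (so $|B|\ge\eps_0$ on $Z(I)$), and then concludes $\chi_I(B)=\infty$ because $g(\xi_j)B(\xi_j)+h(\xi_j)I(\xi_j)=h(\xi_j)I(\xi_j)\to 0\ne 1$. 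You instead use a \emph{sequence} of single-zero test functions $f_k(z)=(z-z_k)/(1-\overline{z_k}z)$ and control $\|h_k\|_\infty$ via the inner-function identity $\|h_k\|_\infty=\|h_kI\|_\infty\le 1+\|g_kf_k\|_\infty$, so that $|h_k(z_k)|=1/|I(z_k)|\to\infty$ forces $\chi_I(f_k)\to\infty$. Your argument avoids the thinning step and the external fact about sparse Blaschke products, and is arguably more elementary and self-contained; the paper's version exhibits a single admissible $B$ with $\chi_I(B)=\infty$ rather than a sequence with $\chi_I(f_k)\to\infty$, but for $c_1(\delta,I)=\infty$ either suffices.
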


Another result concerns possible rates of growth of $c_n (\delta, I)$.

\begin{proposition}
\label{cn}
For every decreasing function $\phi : (0,1) \rightarrow (0,\infty)$ there exists a Blaschke product $B$ such that $\delta_n(B)=0$ and 
$c_n (\delta,B)\ge \phi(\delta)$, $0<\delta<1$, $n\ge 1$.
\end{proposition}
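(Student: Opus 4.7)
The plan is to extract a quantitative lower bound for $c_1(\delta, B)$ in terms of $\eta_B(\delta)$ and then apply the result of \cite{Bo} cited in the introduction to make $\eta_B$ as small as needed.

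First I would reduce to the scalar case $n = 1$. If $f \in H^\infty$ is admissible for $c_1$ at level $\delta$, then the tuple $(f, 0, \ldots, 0)$ is admissible for $c_n$ with the same parameters; setting $g_2 = \cdots = g_n = 0$ in any Bezout representation gives $\chi_B((f, 0, \ldots, 0)) = \chi_B(f)$, whence $c_n(\delta, B) \geq c_1(\delta, B)$ for every $n \geq 1$. Thus it suffices to produce $B$ with $c_1(\delta, B) \geq \phi(\delta)$.

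The core step is the inequality
\[
c_1(\delta, B) \geq \frac{1}{\eta_B(\delta)} - 1.
\]
To prove it, fix $\eta > \eta_B(\delta)$ and pick $z_0 \in \mathbb{D}$ with $\rho(z_0, Z(B)) > \delta$ and $|B(z_0)| < \eta$. The single Blaschke factor $f(z) = (z - z_0)/(1 - \bar z_0 z)$ satisfies $\|f\|_\infty = 1$ and $|f(\lambda)| = \rho(z_0, \lambda) > \delta$ on $Z(B)$, so it is admissible. For any Bezout representation $g f + h B = 1$, evaluation at $z_0$ yields $|h(z_0)| = 1/|B(z_0)| > 1/\eta$. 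Since $|f| = |B| = 1$ a.e.\ on $\partial \mathbb{D}$, we get $|h(\zeta)| = |1 - g(\zeta) f(\zeta)| \leq 1 + |g(\zeta)|$ a.e., so $\|h\|_\infty \leq 1 + \|g\|_\infty$. Combined with the maximum modulus bound $|h(z_0)| \leq \|h\|_\infty$ this gives $\|g\|_\infty > 1/\eta - 1$. Letting $\eta \searrow \eta_B(\delta)$ and taking the sup over admissible $f$ yields the claim.

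To conclude, I would apply the theorem from \cite{Bo} to a strictly increasing function $\psi : (0,1) \to (0,1)$ chosen so that $\psi(\delta) \leq 1/(\phi(\delta) + 2)$ for all $\delta$ (replacing $\phi$ by a strictly decreasing majorant, if necessary, ensures such $\psi$ exists with the required properties). This produces a Blaschke product $B$ with the WEP and $\eta_B(\delta) = o(\psi(\delta))$, so that for small $\delta$ we have $c_1(\delta, B) \geq 1/\eta_B(\delta) - 1 \geq \phi(\delta) + 1 > \phi(\delta)$, while Proposition \ref{deltaseq} gives $\delta_n(B) = 0$ since $B$ has the WEP. The principal technical issue I anticipate is that $\eta_B = o(\psi)$ from \cite{Bo} is only stated as $\delta \to 0$, whereas the proposition asks for the bound on all of $(0, 1)$; this gap can be bridged either by revisiting the construction in \cite{Bo} or by forming a product of several WEP Blaschke products calibrated at different scales of $\delta$.
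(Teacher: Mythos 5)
Your proof is correct and follows essentially the same approach as the paper: exhibit a WEP Blaschke product with $\eta_B$ decaying as fast as desired (via the cited result of \cite{Bo}), test $c_1(\delta,B)$ with a single Blaschke factor $b_{z_0}$ centred at a near-extremal point $z_0$ where $|B(z_0)|$ is small, and obtain the lower bound from evaluation at $z_0$ together with the boundary estimate $|h|\le 1+|g|$. The technical point you flag is real but minor: the paper simply invokes the construction of \cite{Bo} in a formulation that is uniform over $\delta\in(0,1)$ rather than asymptotic, which disposes of the issue without needing a product of several Blaschke products.
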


Note that by the definition of $\tilde \delta$, the function $I$ satisfies the WEP if and only if $\tilde \delta(I)=0$.

Inner functions $I$ satisfying the WEP can also be described in terms of spectral properties of the model operator acting on the Model Space $K_I = H^2 / I H^2$, see \cite{GoMoNi}.

\subsection{Wepable functions.}

An inner function $I$ is called \emph{wepable} \cite{Bo} if it can enter as a
factor in a WEP inner function, i.e.
 if there exists $J$ inner such that $I J$ satisfies the WEP.
 Clearly, if $I$ is a singular inner function, thus without zeros, it cannot
 be WEP, but it can be wepable.
 It is easy to see that only the Blaschke factor in $J$ will help make $IJ$ a WEP function.

 Let us describe some of the results in \cite{Bo}. Let $dA (z)$ be area measure in the unit disc. An inner function $I$ such that for any $\varepsilon >0$ one has
\begin{equation}
\label{2}
    \int_{\{z : |I(z)| < \varepsilon \} } \frac{dA(z)}{1 - |z|^2} = \infty
   % \tag{$2$}
\end{equation}
is not wepable. Moreover there exist singular inner functions $I$ satisfying \eqref{2}.
Hence there exists singular inner functions which are not wepable, answering a question in \cite{GoMoNi}.
Condition \eqref{2} is a sort of Blaschke condition and has also appeared  in \cite{LySe}.
It was also shown in \cite{Bo} that condition \eqref{2} does not characterise (non)-wepable inner functions.

\subsection{Results about the support of the singular measure.}

Given a measurable set $E \subset \mathbb T=\partial \mathbb D$, let $|E|$ denote its normalised length, $|\mathbb T|=1$.
Recall that a closed set $E \subset \mathbb T$ with $|E|=0$ has finite entropy (has finite Carleson characteristic, is a Beurling--Carleson set) if
$$
\mathcal E (E) := \sum |J_k| \log |J_k|^{-1} < \infty,
$$
where $(J_k)_k$ are the connected components of $\mathbb T \setminus E$; more precisely, this value is the entropy of the family $(J_k)_k$.
A classical result of Carleson says that a closed set $E \subset \mathbb T$ is the zero set of an analytic function whose derivatives of any order extend continuously to the closed unit disc if and only if $E$ has zero length and finite entropy \cite{Ca}.

Given an inner function $I$ let $ \text{sing} (I)$ denote the set of points of the unit circle where $I$ can not be extended analytically. If $I=BS$ where $B$ is a Blaschke product with zeros 
$(z_n)_n$ and
$$
S_\mu(z)= \exp \left( - \int_0^{2 \pi} \frac{e^{it} + z}{e^{it} -z} d \mu (t) \right), \qquad z \in \mathbb D,
$$
where $\mu$ is a positive singular measure, then $\text{sing} (I) = (\overline { \{z_n \} } \cap \mathbb T) \cup \supp \mu$, where $\supp \mu$ denotes the (closed) support of $\mu$.

\begin{theorem}
\label{thment}
Let $E$ be a closed subset of the unit circle. The following conditions are equivalent:

{\rm (a)} Every singular inner function whose singular set is contained in $E$ is wepable;

{\rm (b)}  $E$ has zero length and finite entropy.
\end{theorem}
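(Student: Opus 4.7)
The plan is to establish the two implications separately, with direction (b) $\Rightarrow$ (a) being the more substantive construction and (a) $\Rightarrow$ (b) relying on the obstruction \eqref{2} from the introduction.

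For (b) $\Rightarrow$ (a), I fix $\mu$ singular with $\supp\mu \subset E$, where $E$ is Beurling--Carleson with complementary arcs $(J_k)$, and construct a Blaschke product $B$ such that $BS_\mu$ satisfies the WEP. The zero set $\Lambda := Z(B)$ is assembled in two layers. The \emph{primary} dyadic grid hugs $E$: for each scale $2^{-l}$, let $E_l := \{\zeta \in \mathbb T : \dist(\zeta, E) \le 2^{-l}\}$ and place a maximal $c_0 2^{-l}$-separated family of points $z_{l,j} := (1-2^{-l})e^{i\theta_{l,j}}$ with $\theta_{l,j} \in E_l$. The total Blaschke weight of this layer collapses to
\[
\sum_{l,j}(1-|z_{l,j}|) \asymp \sum_l |E_l| \asymp \sum_k |J_k|\log(1/|J_k|) = \mathcal E(E),
\]
finite by hypothesis. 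A \emph{secondary} layer handles horocyclic sub-level sets of $S_\mu$ at concentrated portions of $\mu$ (which extend outside the conical tent above $E$): at each scale $2^{-m}$ and each dyadic atomic part of $\mu$ of mass $\rho$, I insert $O(1)$ zeros of depth $\asymp \rho \cdot 2^{-m}$ along the horocycle trace, for a total added weight $O(\mu(\mathbb T))$. For the WEP verification, I argue that if $\rho(z, \Lambda) > \eta$, then $|B(z)S_\mu(z)| \ge c(\eta) > 0$, by case analysis on the position of $z$: inside the conical tent $\{\dist(\theta(z), E) \le C(1-|z|)\}$ the primary grid yields a zero pseudohyperbolically $\eta$-close to $z$ (a contradiction when $c_0 < \eta$); inside a horocyclic cusp at an atom, the secondary layer provides one; otherwise $z$ is quantitatively separated from $\supp\mu$, and a direct Poisson-kernel estimate yields $|S_\mu(z)| \ge c(\eta)$, while $|B(z)| \ge c(\eta)$ follows from the $\rho$-separation of $\Lambda$ and its finite Blaschke sum.

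For (a) $\Rightarrow$ (b), I argue the contrapositive. If $|E| > 0$, reduce to $|E| = 0$ by passing to a zero-measure closed subset of $E$ of infinite entropy (a standard Cantor construction inside any positive-measure closed set). With $|E|=0$ and $\mathcal E(E) = \infty$, construct a Cantor-type singular measure $\mu$ on $E$ with a local lower-dimensional scaling $\mu(B(\theta, r)) \gtrsim r^\alpha$ for $\theta \in \supp\mu$ and some $\alpha \in (0,1)$. Then $P\mu(z) \gtrsim (1-|z|)^{\alpha-1} \to \infty$ as $1-|z| \to 0$ above $\supp\mu$, so $|S_\mu| < \eps$ on a deep portion of the conical tent over $E$; the Bergman-area computation
\[
\int_{\{|S_\mu|<\eps\}} \frac{dA(z)}{1-|z|^2} \gtrsim \int_0^{r(\eps)} \frac{|E_s|}{s}\, ds \asymp \sum_{|J_k| \le r(\eps)} |J_k|\log(1/|J_k|)
\]
is infinite for every $\eps > 0$, since $\mathcal E(E)=\infty$ and the divergence comes from the small-$|J_k|$ tail. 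By \eqref{2}, $S_\mu$ is not wepable.

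The bulk of the work — and the main obstacle — lies in direction (b) $\Rightarrow$ (a). The subtlety is that the sub-level sets $\{|S_\mu|<\eps\}$ have a hybrid geometric shape, combining conical tents above $\supp\mu$ with horocyclic cusps at concentrated portions of $\mu$, and a single Blaschke product $B$ must cover all of these for \emph{every} $\eps>0$ and \emph{every} $\mu$ on $E$. The Beurling--Carleson entropy supplies exactly the summability needed for the primary grid, while finiteness of $\mu(\mathbb T)$ controls the cusp layer; the delicate technical point is tuning the grid density so that the Blaschke sum is controlled while the pseudohyperbolic separation suffices to yield the required Poisson-kernel bounds outside both the tent and the cusps.
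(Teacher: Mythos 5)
Your overall strategy matches the paper's: the necessity direction goes through the area obstruction \eqref{2}, and the sufficiency direction builds a two-layer Blaschke product, a grid hugging $E$ plus a correction in the cusps over the complementary arcs. However, there is a genuine gap in the central sufficiency construction.

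In the primary layer you place a maximal $c_0 2^{-l}$-separated net at each depth $2^{-l}$ with $c_0$ a \emph{fixed} constant, and then in the WEP verification you write that inside the conical tent the grid yields a zero pseudohyperbolically $\eta$-close to $z$, ``a contradiction when $c_0 < \eta$''. But this only gives a lower bound on $|BS_\mu|$ on $\{\rho(\cdot,\Lambda)>\eta\}$ for $\eta$ \emph{above} the fixed threshold $c_0$, whereas the WEP requires $\eta_I(\eps)>0$ for \emph{every} $\eps>0$, and the hard case is precisely $\eps\to 0$: as $\eps$ shrinks the set $\{\rho(\cdot,\Lambda)>\eps\}$ \emph{grows}, so there is no way to bootstrap from large $\eta$. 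With a fixed-density net, one can find points $z$ arbitrarily deep in the tent over $E$ with $\rho(z,\Lambda)\asymp c_0$ and $|S_\mu(z)|$ arbitrarily small. The paper avoids this by letting the number of zeros per box tend to infinity as the scale shrinks: it picks $k_j\to\infty$ slowly so that $\sum_j k_j\sum_{J\in\mathcal F\cap\mathcal D_j}|J|<\infty$, which is exactly what the finite-entropy summability buys you and what a fixed $c_0$ throws away. The same issue recurs in your secondary (cusp) layer, where you insert ``$O(1)$ zeros'' per scale; the paper again uses an unbounded density parameter $t_k\to\infty$ in the strips $\Omega_k(J)$, and then a nontrivial splitting $B_2=B_3B_4$ together with \eqref{kl8}--\eqref{kl9} to control $\log|B_2(z)|^{-1}$. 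Your sketch does not supply that estimate.

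There is also a softer gap in the necessity direction. You propose to build a Cantor measure on $E$ with a uniform Frostman bound $\mu(B(\theta,r))\gtrsim r^\alpha$ and then use the area estimate over the tent to contradict \eqref{2}. But for a general closed $E$ of zero length and infinite entropy there is no reason such a uniformly $\alpha$-regular measure exists with full support, and if $\supp\mu$ is a proper subset of $E$ the integral $\int_{\{|S_\mu|<\eps\}} dA/(1-|z|^2)$ controls a tent over $\supp\mu$, whose entropy could be finite even though $\mathcal E(E)=\infty$. The paper sidesteps this entirely via Lemma~\ref{L1}: take $\mu=K\sum_k|I_k|(\delta_{a_k}+\delta_{b_k})$, weighted point masses at the endpoints of the complementary arcs. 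Then $\mu(J)\ge K|J|$ for every dyadic $J$ meeting $E$, hence $P[\mu](z(J))\ge C$, and $\sum_{J\cap E\ne\varnothing}|J|=\infty$ follows from the equivalence (a)$\Leftrightarrow$(b) of Lemma~\ref{L1}, without any Frostman regularity on $E$.
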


The sufficiency of the conditions in (b) is obtained by careful constructions
of Blaschke products which are carried out in Section \ref{proofTh1}.

The necessity of the conditions in (b) is related to estimate \eqref{2} and follows from the following result which may be of independent interest. We identify the unit circle with the interval 
$[0,2\pi)$ and consider the dyadic arcs $[2\pi\cdot k 2^{-n}, 2 \pi\cdot (k+1) 2^{-n})$, $0\le k<2^n$, $n\ge 0$.
Those arcs have normalised length equal to $2^{-n}$.

Given an arc $J \subset \mathbb T$ of center ${\xi}_J$, % and normalized length $|J|$,
write $z(J) = (1- \frac34|J|) {\xi}_J$. We also denote $Q(J):= \{ r\eit : \eit \in \overline{J}, 1-|J| \le r<1\}$ (the Carleson box associated to $J$)
 and $T(J) := \{ r\eit : \eit \in \overline{J}, 1-|J| \le r\le 1-|J|/2\}$ (the top half of the box).

 Given a finite measure $\mu$ in the unit circle let $P[\mu]$ be its Poisson integral.

\begin{lemma}
\label{L1}
Let $E$ be a closed subset of the unit circle. The following conditions are equivalent:

{\rm (a)}  $E$ has zero length and finite entropy;

{\rm (b)}  $\sum |J| < \infty$, where the sum is taken over all dyadic arcs $J$ such that $J \cap E \neq \varnothing$;

{\rm (c)}  For any positive singular measure $\mu$ whose support is contained in  $E$ and any $C>0$ one has $\sum |J| < \infty$, where the sum is taken over all dyadic arcs 
$J \subset\mathbb T$ such that $P[\mu] (z(J)) \ge C$.
\end{lemma}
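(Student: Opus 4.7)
The plan is to prove (a)$\iff$(b) by a direct dyadic counting identity, and (b)$\iff$(c) via a Poisson-kernel size estimate paired with a branching construction of a singular measure.

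For (a)$\iff$(b), set $T_n := \{J \in \mathcal D_n : J \cap E \ne \varnothing\}$, where $\mathcal D_n$ denotes the dyadic arcs of length $2^{-n}$. Every dyadic arc of length $2^{-n}$ missing $E$ lies inside a unique complementary arc $J_k$, and each $J_k$ with $\ell_k \ge 2^{-n}$ contains $2^n \ell_k + O(1)$ such arcs. If $|E|>0$ then $|T_n| \ge 2^n |E|$, so $\sum_n 2^{-n}|T_n|$ diverges. If $|E|=0$, swapping the order of summation using $\sum_k \ell_k = 1$ gives
$$
\sum_{J \cap E \ne \varnothing} |J| \;=\; \sum_n 2^{-n}|T_n| \;\asymp\; \sum_k \ell_k \log(1/\ell_k) + \sum_k \ell_k \;=\; \mathcal E(E) + 1,
$$
which is finite precisely when $\mathcal E(E)<\infty$.

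For (b)$\Rightarrow$(c), fix $\mu$ positive singular with $\supp \mu \subset E$ and $C>0$. Using the standard bound $P_{z(J)}(t) \lesssim |J|/(|J|^2 + d(e^{it},\xi_J)^2)$ together with $d(e^{it},\xi_J) \ge \dist(\xi_J, E)$ for $t \in \supp\mu \subset E$, the condition $P[\mu](z(J)) \ge C$ forces $|J| \le c_1 \|\mu\|/C$ and $\dist(\xi_J,E) \le R_n := \sqrt{|J|\|\mu\|/(c_0 C)}$ with $|J|=2^{-n}$. The $R_n$-neighborhood of $E$ has total length $\sum_{k:\ell_k \le 2R_n}\ell_k + 2R_n \cdot \card\{k:\ell_k>2R_n\}$, so the number of dyadic arcs at scale $n$ whose centers lie within $R_n$ of $E$ is at most $2^n$ times this quantity plus $O(1)$. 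Summing in $n$ by a swap of summation structurally identical to the proof of (a)$\iff$(b) yields $\sum_{J \in \mathcal F_C}|J| \lesssim \mathcal E(E) + \|\mu\|/C$, finite by (b).

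For (c)$\Rightarrow$(b) I argue by contrapositive: assuming (b) fails, I construct a positive singular measure $\mu$ supported in $E$ that forces the sum in (c) to diverge. View $T := \bigcup_n T_n$ as a rooted tree with parent-child relation from dyadic subdivision; each $J \in T$ has $c_J \in \{1,2\}$ children in $T$ (at least one dyadic half of $J$ meets $E$). Set $\mu(\mathbb T)=1$ and distribute $\mu(J)$ equally among its children in $T$ at every step. Along every path $\mathbb T = J_0 \supset J_1 \supset \cdots$, the density satisfies $\mu(J_n)/|J_n| = \prod_{k<n}(2/c_{J_k}) \ge 1$, so $\mu(J) \ge |J|$ for every $J \in T$. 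Since $P_{z(J)}(t) \ge c_0/|J|$ for all $t \in J$, one gets $P[\mu](z(J)) \ge c_0\, \mu(J)/|J| \ge c_0$ for every $J \in T$, so $T$ lies in $\mathcal F_{c_0/2}$ and
$$
\sum_{J \in \mathcal F_{c_0/2}} |J| \;\ge\; \sum_{J \cap E \ne \varnothing} |J| \;=\; \infty.
$$
When $|E|=0$ the measure $\mu$ is automatically singular; when $|E|>0$, one first replaces $E$ by a closed subset $E' \subset E$ with $|E'|=0$ and $\mathcal E(E')=\infty$, obtained by a Cantor-type construction inside a perfect component of $E$ (produced by Cantor--Bendixson) with prescribed gap sizes such as $\ell_n \sim 1/(n\log^2 n)$, and runs the same branching construction on $E'$.

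The main obstacle is this last reduction in the case $|E|>0$: extracting a closed null subset of $E$ with infinite entropy requires a careful Cantor-type selection inside a perfect part of $E$, and it is the only delicate step in the argument; the rest is governed by the Poisson-kernel size bound and the counting identity behind (a)$\iff$(b).
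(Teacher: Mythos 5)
Your argument is correct and reaches the same three equivalences, but the route you take for (c)$\Rightarrow$(b) is genuinely different from the paper's. For the equivalence (a)$\Leftrightarrow$(b) and the implication (b)$\Rightarrow$(c) your approach is essentially the paper's in slightly different clothing: the paper decomposes the sum over the complementary arcs $I_k$ and observes that for $|J|<|I_k|^2$ the arcs at scale $n$ inside $I_k$ with $P[\mu](z(J))\ge C$ contribute only $\lesssim 2^{-n/2}$, whereas you reorganize the same $\sqrt{|J|}$-neighborhood estimate scale by scale and then interchange sums; both give $\lesssim\mathcal E(E)+O(1)$. The real divergence is in (c)$\Rightarrow$(b) (the paper phrases it as (c)$\Rightarrow$(a), which is the same by (a)$\Leftrightarrow$(b)): the paper takes point masses $K\sum_k|I_k|(\delta_{a_k}+\delta_{b_k})$ at the endpoints of the complementary arcs and checks directly that $\mu(J)\ge K|J|$ whenever $J\cap E\ne\varnothing$; you instead build a branching (``equal mass to children'') measure on the dyadic tree of arcs meeting $E$, so that the density $\mu(J)/|J|$ is monotone nondecreasing along chains and hence $\ge1$ automatically. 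Your construction is arguably cleaner — the lower bound $\mu(J)\ge|J|$ falls out of the multiplicative structure with no case analysis at gap endpoints — and it also makes it transparent that $\supp\mu\subset E$ and that $\mu$ is singular once $|E|=0$. In exchange, for the case $|E|>0$ you need an auxiliary fact (a closed $E'\subset E$ with $|E'|=0$ and $\mathcal E(E')=\infty$) that is not completely trivial to produce: the entropy of a subset is not monotone (coalescing complementary arcs can either increase or decrease $\sum|J_k|\log|J_k|^{-1}$), so the Cantor-type subset really must be constructed, and one needs the density of $E$ near a Lebesgue density point to keep choosing gap endpoints inside $E$ at every stage. To be fair, the paper's treatment of $|E|>0$ is equally terse: it first uses $\mu_0=K\chi_E\,dm$, which is absolutely continuous rather than singular, and then asserts without details that one ``can replace $\mu_0$ by a Cantor type singular measure $\mu_1$ while keeping the sum infinite.'' So both treatments leave the same Cantor-type construction to the reader; your version of it is a legitimate alternative, but you should be aware that ``produced by Cantor--Bendixson'' alone does not settle the entropy requirement and that the non-monotonicity of $\mathcal E$ is precisely what makes this step non-automatic.
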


The condition in (c) can be understood as a discrete version of \eqref{2} with $I= S_\mu$.
%The condition in (c) is a discrete version of [].

The condition in (b) can be seen as a discrete version of $\int_{\Gamma(E)} \frac{dA(z)}{1 - |z|^2}  < \infty$,
where $\Gamma(E)$ denotes the union of all the Stolz angles with vertex on a point of $E$. For
a related result, see \cite[Lemma A.1]{EFKeRa}.

\subsection{Results about regularity of singular measures.}

Positive singular measures can fairly distribute their mass. For instance, there exist singular probability measures $\mu$ on the unit circle such that
$$
\sup \Bigl\{\Bigl|\frac{\mu(J)}{\mu (J')} - 1\Bigr| + \Bigl|\frac{\mu(J)-\mu(J')}{|J|} \Bigr|\Bigr\} \to 0 \quad \text{ as } |J| \to 0,
$$
where the supremum is taken over any pair of adjacent arcs $J, J' \subset \mathbb T$ of the same length (see \cite{AlAnNi}). As a consequence of Theorem~\ref{thment} we will prove that positive singular measures $\mu$ such that $S_\mu$ is wepable cannot distribute their mass as evenly. Actually a Dini type condition governs the growth of the density of such measures.

\begin{corollary}
\label{regular}
{\rm (a)} Let $\mu$ be a positive singular measure on the unit circle and consider $w(t) = \sup \mu(J)$, where the supremum is taken over all arcs $J \subset \mathbb T$ with $|J|= t$. Assume that 
$$
\sum_{n\ge 1} \frac{2^{-n}}{w(2^{-n})} = \infty .
$$
Then $S_\mu$ satisfies condition \eqref{2} and, hence, it is not wepable.

{\rm (b)} Let $w: [0,1] \rightarrow [0, \infty)$ be a nondecreasing function with $w(0)=0$ such that $w(2t) < 2 w(t)$ for any $t>0$. Assume that 
$$
\sum_{n\ge 1} \frac{2^{-n}}{w(2^{-n})} < \infty .
$$
Then there exists a positive singular measure $\mu$ in the unit circle satisfying $\mu (J) < w(|J|)$ for any arc $J \subset\mathbb T$,  
such that its support has zero length and finite entropy, and hence $S_\mu$ is wepable.
\end{corollary}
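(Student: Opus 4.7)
Part (a). Given $\varepsilon\in(0,1)$, set $C=\log(1/\varepsilon)$; since $|S_\mu(z)|=\exp(-P[\mu](z))$, the set $\{|S_\mu|<\varepsilon\}$ coincides with $\{P[\mu]>C\}$, and my task is to prove $\int_{\{P[\mu]>C\}}dA(z)/(1-|z|^2)=\infty$ for every such $C$. I plan to reduce to a dyadic estimate: on the top half $T(J)$ of the Carleson box over a dyadic arc $J$ of length $2^{-n}$, the Poisson integral $P[\mu]$ is comparable by Harnack to $P[\mu](z(J))\gtrsim\mu(J)/|J|$, and $\int_{T(J)}dA/(1-|z|^2)\asymp|J|$; since the sets $T(J)$ are pairwise disjoint, it suffices to exhibit, for arbitrarily large $K$, a family $\mathcal B_n$ of dyadic arcs of length $2^{-n}$ with $\mu(J)/|J|>K$ and $\sum_n|\mathcal B_n|\cdot 2^{-n}=\infty$ (choosing $K$ large enough then forces $P[\mu]>C$ on each $T(J)$).

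To produce $\mathcal B_n$, I would exploit the classical fact that for a positive singular measure $\mu$, the density $\mu(I(\xi,\ell))/\ell$ tends to $+\infty$ as $\ell\to 0$ for $\mu$-a.e.\ $\xi$, where $I(\xi,\ell)$ denotes the arc of length $\ell$ centered at $\xi$. Hence the sets $F^m_K:=\{\xi:\mu(I(\xi,\ell))>2K\ell\text{ for every }\ell\le 2^{-m}\}$ satisfy $\mu(F^m_K)\nearrow\mu(\mathbb T)$, and I fix $m_0$ with $\mu(F^{m_0}_K)\ge\mu(\mathbb T)/2$. For each $n\ge m_0$ and each $\xi\in F^{m_0}_K$, the arc $I(\xi,2^{-n})$ carries mass $>2K\cdot 2^{-n}$ and overlaps at most two dyadic arcs of length $2^{-n}$; at least one, call it $J$, satisfies $\mu(J)>K\cdot 2^{-n}$, and $\xi$ lies in the arc $J^{**}$ formed by $J$ and its two immediate dyadic neighbors. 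Thus $F^{m_0}_K\subset\bigcup_{J\in\mathcal B_n}J^{**}$. The elementary bound $w(3t)\le 3w(t)$ (covering a length-$3t$ arc by three length-$t$ arcs) yields $\mu(\mathbb T)/2\le 3|\mathcal B_n|w(2^{-n})$, equivalently $|\mathcal B_n|\gtrsim 1/w(2^{-n})$, and the divergent Dini hypothesis closes the argument.

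Part (b). By Theorem \ref{thment} it is enough to construct a closed $E\subset\mathbb T$ of zero length and finite entropy, together with a positive singular measure $\mu$ supported on $E$ satisfying $\mu(J)<w(|J|)$ for every arc $J$. Set $C_0\ge 5$ and define $a_n:=\lceil\log_2(C_0/w(2^{-n}))\rceil$, $N_n:=2^{a_n}$. The strict doubling $w(2t)<2w(t)$ forces $a_n-a_{n-1}\in\{0,1\}$, so I can build a binary-type nested family of arcs: at each level $n$ either every parent has two children (when $a$ jumps) or every parent has exactly one child (when $a$ is unchanged); level-$n$ arcs have length $2^{-n}$, so two children exactly fill a parent and leave no new gap, while a single child leaves a gap of length $2^{-n}$. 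The total length of the level-$n$ family is $N_n\cdot 2^{-n}=C_0\cdot 2^{-n}/w(2^{-n})\to 0$, so $|E|=0$. The number of new gaps at level $n$ is $2N_{n-1}-N_n$, and an Abel summation gives total entropy
\[
\log 2\sum_n n(2N_{n-1}-N_n)2^{-n}=\log 2\sum_n N_n 2^{-n}\lesssim\sum_n 2^{-n}/w(2^{-n})<\infty.
\]
I then define $\mu$ as the uniform measure on $E$ giving each level-$n$ arc mass $1/N_n$; this is consistent because each parent splits its mass equally among its one or two children. For any arc $J$ with $|J|\in(2^{-n-1},2^{-n}]$, $J$ meets at most two level-$n$ arcs, so $\mu(J)\le 2/N_n\le 2w(2^{-n})/C_0<w(2^{-n})/2<w(|J|)$, the last inequality by doubling; a similar estimate handles the remaining scales. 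Since $|E|=0$, $\mu$ is singular, and Theorem \ref{thment} yields the wepability of $S_\mu$.

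The main obstacle is in Part (a): the argument must accommodate \emph{every} $\varepsilon>0$, i.e.\ arbitrarily large $C$, which is precisely what the singularity decomposition $F^m_K$ permits by letting $K$ be chosen as large as one wishes; in Part (b) the only delicate point is to arrange the binary tree so that $N_n$ can indeed grow by a factor of one or two at every step—which the condition $a_n-a_{n-1}\in\{0,1\}$ supplies automatically.
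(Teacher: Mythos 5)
Your argument is correct in substance but takes a visibly different route from the paper's in both parts. In part (a), both proofs count, for each $n$, the dyadic arcs $J$ of length $2^{-n}$ with density $\mu(J)/|J|$ large and use $\int_{T(J)}dA/(1-|z|^2)\asymp|J|$ together with $\mu(J)\le w(|J|)$ to get at least $\gtrsim 1/w(2^{-n})$ such arcs; the paper does this directly via the dyadic differentiation theorem (getting $\sum_{J\in\mathcal L_n}\mu(J)\ge\mu(\mathbb T)/2$ for $n\ge n_0$ with $\mathcal L_n=\{J\in\mathcal D_n:\mu(J)>10|J|\log\varepsilon^{-1}\}$), while you go through the Besicovitch density set $F^{m_0}_K$ and a covering by tripled arcs — both are sound, and the bound $w(3t)\le 3w(t)$ you use is elementary. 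In part (b), both constructions build a Cantor-type closed set with roughly $1/w(2^{-n})$ generation-$n$ dyadic arcs and a uniformly distributed mass; the paper parametrizes through super-levels $n_k$ where $w$ first dips below $2^{-k+1}$ and appeals to Lemma~\ref{L1} for entropy, whereas you work level-by-level with $N_n=2^{a_n}$, $a_n=\lceil\log_2(C_0/w(2^{-n}))\rceil$, and verify finite entropy directly by counting the $2N_{n-1}-N_n$ gaps of length $2^{-n}$ opened at level $n$; your summation by parts $\sum_n n(2N_{n-1}-N_n)2^{-n}=\sum_n N_n2^{-n}$ (with vanishing boundary term, since $(N_n2^{-n})_n$ is non-increasing and summable) is a clean self-contained variant that avoids Lemma~\ref{L1} altogether.

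There is one point to repair in part (b): the formula $N_n=2^{a_n}$ is not anchored at a one-arc level, so in general $N_0\ne 1$ and in fact $N_n>2^n$ for small $n$ (e.g.\ for $w(t)=t^{0.9}$ and $C_0=5$ one has $N_n>2^n$ for every $n\le 23$), and then no family of $N_n$ disjoint dyadic arcs of length $2^{-n}$ can exist, so the nested binary tree cannot start as described. One should pick a starting level $n^*$, replace $a_n$ by $\max(0,a_n-a_{n^*})$ (which preserves the $\{0,1\}$-increment property) and take $\mu(\mathbb T)\asymp w(2^{-n^*})$ rather than $1$, so that $\mu(J)<w(|J|)$ also holds for arcs of length $>2^{-n^*}$. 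This is purely a normalization issue and does not affect the architecture of your argument; note that the paper's own proof records only $\mu(J)<Cw(|J|)$ and likewise relies on a tacit rescaling.
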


\begin{definition}
A closed subset $E$ of the unit circle is called \emph{porous}
if there exists a constant $C>0$ such that for any arc $J \subset\mathbb T$,
there exists a subarc $J' \subset J \setminus E$ with $|J'| > C |J|$.
\end{definition}

The porosity condition (the Kotochigov condition, the (K) condition) appears naturally in the free interpolation problems for different classes of analytic functions smooth up to the boundary, 
see \cite{dyn}.

The next auxiliary result is a scale invariant version of Lemma \ref{L1}.

\begin{lemma}
\label{lempor}
Let $E$ be a closed subset of the unit circle. The following conditions are equivalent:

{\rm (a)} $E$ is porous;

{\rm (b)} There exists a constant $C>0$ such that for any dyadic arc $J$ one has
 $$
 \sum_{I \in E(J)} |I| \le C |J|,
 $$
 where $E(J)$ is the family of the dyadic arcs $I \subset J$ such that $I \cap E \neq \varnothing$;

{\rm (c)} There exists a constant $C>0$ such that for any finite positive measure $\mu$ with support contained in $E$, any number $A>0$ and any dyadic arc $J \subset\mathbb T$, one has
$$
\sum_{I \in E(A)} \frac{|I||J|}{|1- \overline{z(I)} z(J)|^2} \le \frac CA P[\mu] (z(J)), 
$$
where $E(A)$ is the family of the dyadic arcs $I$  such that $P[\mu] (z(I)) \ge A$.
\end{lemma}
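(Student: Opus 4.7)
My plan is to prove the cycle $(a)\Rightarrow(b)\Rightarrow(c)\Rightarrow(a)$.

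First, $(a)\Leftrightarrow(b)$ is the scale-invariant analogue of Lemma~\ref{L1}(a)$\Leftrightarrow$(b). For $(a)\Rightarrow(b)$, porosity provides an integer $k$ such that every dyadic arc has at least one of its $2^k$ descendants at depth $k$ disjoint from $E$; iterating inside $J$ (with $n_0=\log_2|J|^{-1}$), the number of descendants of $J$ at generation $n_0+jk$ meeting $E$ is at most $(2^k-1)^j$, and summing the resulting geometric series (including intermediate generations with the trivial bound) yields $\sum_{I\in E(J)}|I|\le k\,2^k|J|$. Conversely, if $\sum_{I\in E(J)}|I|\le C|J|$, then at some generation $n_0+m$ with $m\le C+1$ a dyadic subarc of $J$ of length $\ge 2^{-(C+1)}|J|$ must be disjoint from $E$, yielding (dyadic, hence general) porosity.

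For $(b)\Rightarrow(c)$, I use a Calder\'on--Zygmund stopping-time decomposition. From the standard estimate $P[\mu](z(I))\asymp\sum_{k\ge 0}2^{-k}\mu(I^{(k)})/|I^{(k)}|$, any $I\in E(A)$ has a smallest ancestor $K(I)\supset I$ with $K(I)\cap E\ne\varnothing$ and $\mu(K(I))/|K(I)|\gtrsim A$; let $\{K_\ell\}$ be the disjoint family of maximal such arcs. Then $\sum_\ell|K_\ell|\lesssim\|\mu\|/A$ by weak $L^1$, and by (b) applied inside each $K_\ell$, $\sum_{K(I)=K_\ell}|I|\lesssim|K_\ell|$. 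Splitting the sum in (c) according to $\ell$, using the pointwise equivalence $|J|/|1-\overline{z(I)}z(J)|^2\asymp P(\xi_I,z(J))$ in the regime $|I|\le|J|$ (with a symmetric treatment for $|I|>|J|$ that summed over ancestors of $J$ contributes an $O(1/|J|)$ correction), and converting $|K_\ell|$ into an integral of $P(\cdot,z(J))$ against $d\mu/A$ via $\mu(K_\ell)/|K_\ell|\gtrsim A$, one obtains the desired bound. Dyadic arcs $I\in E(A)$ disjoint from $E$ satisfy $d(I,E)\lesssim\sqrt{\|\mu\||I|/A}$, forcing them into a thin tube around $E$; each such $I$ is associated to a nearby dyadic arc meeting $E$ of comparable scale and absorbed into the same estimate.

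For $(c)\Rightarrow(a)$, I argue by contradiction. If $E$ is not porous, then for every integer $k_0\ge 1$ there is a dyadic arc $J$ such that every dyadic subarc of $J$ of generation $n_0+j$ meets $E$ for $0\le j\le k_0$. Choose $\xi_I^*\in I\cap E$ for each generation-$(n_0+k_0)$ dyadic subarc $I\subset J$ and set $\mu=\sum_I|I|\delta_{\xi_I^*}$: then $\|\mu\|=|J|$, $P[\mu](z(J))\asymp 1$, and for every dyadic subarc $I^*$ of $J$ of generation $n_0+j$ with $0\le j\le k_0$, one has $P[\mu](z(I^*))\gtrsim\sum_{I\subset I^*}|I|/|I^*|=1$. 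Consequently all such $I^*$ lie in $E(c_0)$ for an absolute $c_0>0$, so the left-hand side of (c) for this $J$ and $A=c_0$ is bounded below by $\sum_{j=0}^{k_0}\sum_{I^*\subset J,\,|I^*|=2^{-j}|J|}|I^*|/|J|=k_0+1$, while the right-hand side is an absolute constant: contradiction for $k_0$ sufficiently large. The main technical obstacle is Part~2, specifically controlling cleanly the large-$|I|$ regime and transferring off-$E$ arcs of $E(A)$ to the stopping-time framework while preserving the $P(\cdot,z(J))/A$ bookkeeping.
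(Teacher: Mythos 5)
Your proof of (a)$\Leftrightarrow$(b) and of (c)$\Rightarrow$(a) is correct and is in substance the same as the paper's (the paper counts the covering arcs slightly differently in (a)$\Rightarrow$(b) and uses a rescaled atomic measure in (c)$\Rightarrow$(a), but the ideas coincide). The genuine issue is (b)$\Rightarrow$(c), which you have flagged yourself and which you attack by a Calder\'on--Zygmund stopping time rather than by the paper's route; as sketched, two steps do not close.

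First, the off-$E$ arcs. Take $\mu=\delta_{\xi_0}$, so $E=\{\xi_0\}$. At scale $|I|=2^{-m}$ the arcs $I\in E(A)$ (all but a bounded number disjoint from $E$) are the roughly $(A|I|)^{-1/2}$ arcs within distance $\asymp(|I|/A)^{1/2}$ of $\xi_0$; there is no ``nearby dyadic arc meeting $E$ of comparable scale'' that can absorb them, since at scale $|I|$ that arc is essentially unique and carries only one term of the sum. (Inequality (c) still holds in this example, but only after summing a geometric series across scales, not by a pointwise absorption.) The paper instead splits $E(A)$ according to whether $3I$ meets $E$, and handles the second family by stratifying it into $\mathcal H_k=\{I:2^{k-1}A\le P[\mu](z(I))<2^kA\}$: because $P[\mu](z(L))\asymp (|L|/|I|)\,P[\mu](z(I))$ for $L\subset I$ when $3I\cap E=\varnothing$, each $\mathcal H_k$ has Carleson constant bounded uniformly in $k$, and the maximum-principle Lemma~\ref{kl7} applied to the positive harmonic function $u=P[\mu]$ gives a bound $\lesssim 2^{-k}P[\mu](z(J))/A$, summable in $k$. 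Some quantitative version of this geometric-decay stratification appears unavoidable, and it is precisely what your thin-tube heuristic is missing. Second, the $|I|>|J|$ regime: you assert an ``$O(1/|J|)$ correction'', but the target is $\frac{C}{A}P[\mu](z(J))$, which has no built-in lower bound, so even a bound of $O(1)$ is not admissible without showing $P[\mu](z(J))\gtrsim A$. What actually closes this part is a Harnack-type transfer: if $I\in E(A)$, $|I|\ge|J|$ and $\dist(I,J)\lesssim|I|$, then $P[\mu](z(J))\gtrsim A|J|/|I|$, matching the term $\asymp|J|/|I|$ and giving a geometric series in the scale; this needs to be said and is not subsumed by the stopping time. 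In short, your approach to (b)$\Rightarrow$(c) is genuinely different from the paper's harmonic-function argument, and it is exactly at the two points you labelled as ``technical obstacles'' that the argument as written would fail.
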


Let $S$ be a wepable inner function. It may happen that any Blaschke product $B$ such that $BS$ satisfies the WEP must have some of its zeros $(z_n)_n$ located at points where $|S|$ is close to $1$; more precisely,  $\limsup_{n \to \infty} |S(z_n)| = 1$. (Theorems~\ref{thment} and \ref{thmpor} together prove the existence of such $S$). 
% Actually an example will be provided in the proof of next result.

\begin{definition}
An inner function $S$ will be called \emph{easily wepable} if there exists a constant $m<1$ and a Blaschke product $B$ such that $SB$ satisfies the WEP and $Z(B) \subset \{ z \in \mathbb D : |S(z)|<m \}$.
\end{definition}

\begin{theorem}
\label{thmpor}
Let $E$ be a closed subset of the unit circle. The following conditions are equivalent:

{\rm (a)} Every singular inner function whose singular set is contained in $E$ is easily wepable;

{\rm (b)} The set $E$ is porous.
\end{theorem}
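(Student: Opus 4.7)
The plan is to mimic the proof of Theorem~\ref{thment} while replacing the global Beurling--Carleson control of Lemma~\ref{L1} with the scale-invariant Lemma~\ref{lempor}. The central object in both directions is the \emph{canonical Blaschke product} $B(\mu, A)$ whose zero set equals $\{z(I) : I \text{ dyadic}, P[\mu](z(I)) \geq A\}$; since $|S_\mu(z)| = e^{-P[\mu](z)}$, its zero set is automatically contained in $\{|S_\mu| \leq e^{-A}\}$. Applying Lemma~\ref{lempor}(c) with $J = \mathbb T$ yields $\sum_{I \in E(A)} |I| \lesssim \mu(\mathbb T)/A < \infty$, so $B(\mu, A)$ is a genuine Blaschke product.

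For (b)$\Rightarrow$(a): given $E$ porous, $\mu$ singular with $\supp\mu \subset E$, pick $m \in (0,1)$ and set $A := -\log m$, $B := B(\mu, A)$. To prove WEP of $S_\mu B$, fix $\varepsilon > 0$ and $z \in \D$ with $\rho(z, Z(B)) > \varepsilon$. Choosing a dyadic arc $I_z$ with $z(I_z)$ pseudohyperbolically within $\varepsilon/3$ of $z$ (possible by descending deep enough in scale) ensures $I_z \notin E(A)$, and Harnack delivers $|S_\mu(z)| \geq e^{-C(\varepsilon)A}$, a lower bound independent of $z$. A second application of Lemma~\ref{lempor}(c) at $J = I_z$, paired with standard pseudohyperbolic comparability, bounds the Carleson sum of $Z(B)$ tested at $z$ by a constant depending only on $\varepsilon$ and $m$. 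This is precisely the Carleson-style WEP criterion from the introduction, yielding $|B(z)| \geq \eta_B(\varepsilon) > 0$, and the product inequality closes the bound on $|S_\mu B(z)|$.

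For (a)$\Rightarrow$(b), I argue the contrapositive. If $E$ is not porous, negating Lemma~\ref{lempor}(b) produces dyadic arcs $J_n$ with $\sum_{I \in E(J_n)} |I| \geq n|J_n|$, which can be arranged disjoint with $|J_n|$ decaying rapidly. Construct $\mu = \sum_n a_n \nu_n$, where $\nu_n$ is an atomic measure on $E \cap J_n$ placing mass $\asymp |I|$ at a chosen point of $E \cap I$ for each $I \in E(J_n)$, and the $a_n$ are chosen so that $P[\nu_n](z(I)) \geq A_0$ on a fixed positive proportion of $I \in E(J_n)$ while $\mu(\mathbb T) < \infty$. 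For any Blaschke product $B$ with $Z(B) \subset \{|S_\mu| < m\}$ such that $S_\mu B$ is WEP, the sufficiency argument shows that every $z(I)$ with $P[\mu](z(I)) \geq A_0$ lies within fixed pseudohyperbolic distance of $Z(B)$. Testing the WEP-Carleson inequality for $S_\mu B$ at $z(J_n)$ then caps $\sum_{I \in E(J_n) \cap E(A_0)} |I|/|J_n|$ by a constant independent of $n$, contradicting the construction as $n \to \infty$.

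The main obstacle is the construction of $\mu$ in the contrapositive: $\mu$ must be singular, of finite total mass, and have $P[\mu](z(I)) \geq A_0$ on a positive proportion of the arcs $I \in E(J_n)$ for every $n$. The weights $a_n$ cannot decay too fast (or the Poisson integrals at $z(J_n)$ fall below $A_0$) yet must be tempered enough, together with sufficiently rapid decay of $|J_n|$, to give $\mu(\mathbb T) < \infty$; moreover, the cross-contributions $P[\nu_m](z(J_n))$ for $m \neq n$ must stay bounded, forcing a careful pseudohyperbolic decoupling of the $J_n$. Setting up this inductive selection, using that non-porosity provides bad arcs at arbitrarily fine scales, is the principal technical task.
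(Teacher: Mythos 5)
Your \((b)\Rightarrow(a)\) argument has a genuine gap. You place exactly one zero at each point $z(I)$ with $P[\mu](z(I))\ge A$, and then claim that given $z$ with $\rho(z,Z(B))>\eps$ you can find a dyadic arc $I_z$ with $z(I_z)$ within pseudohyperbolic distance $\eps/3$ of $z$ by ``descending deep enough in scale.'' This is false: the lattice $\{z(I):I\in\mathcal D\}$ is pseudohyperbolically coarse (adjacent lattice points sit at pseudohyperbolic distance comparable to an absolute constant, and as $|I|\to 0$ the points $z(I)$ recede towards $\partial\D$, so $\rho(z,z(I))\to 1$). Consequently there are points $z$ at pseudohyperbolic distance bounded below from every $z(I)$ (hence from $Z(B)$) but with $P[\mu](z)$ arbitrarily large, so $|S_\mu(z)|$ arbitrarily small; for such $z$ the product $|S_\mu B(z)|\le |S_\mu(z)|$ is not bounded below, and WEP fails. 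The fix requires a \emph{graded} construction: the paper sets $\mathcal G_k=\{I:k^2<u(z(I))\le(k+1)^2\}$ and places $k$ points uniformly distributed in $T(I)$ for $I\in\mathcal G_k$, so the zero set becomes denser (pseudohyperbolic spacing $\asymp 1/\sqrt{k}$) precisely where $|S_\mu|$ is exponentially small; only then does $\rho(z,Z(B))>\eps$ force $u(z)\lesssim\eps^{-4}$, and the Carleson sum for $Z(B)$ is controlled by Lemma~\ref{lempor}(c) summed over $k$ with weight $k/k^2$. Your single-threshold version has neither property.

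For \((a)\Rightarrow(b)\) you correctly identify the strategy --- contrapositive, with an atomic singular measure exploiting non-porosity, and a contradiction between the forced zero placement of a purported $B$ and the constraint $Z(B)\subset\{|S_\mu|<m\}$ --- and you correctly flag the principal difficulty (simultaneous control of the weights, total mass, cross-contributions of Poisson integrals between far-apart arcs). But you leave that construction open. The paper resolves it by a self-contained combinatorial lemma (Lemma~\ref{kl6}): for each bad arc $J_n$ of generation where $E$ fails porosity it produces an atomic measure $\mu_n$ supported on $E\cap J_n$ with $\|\mu_n\|=o(|J_n|)$, such that any zero set capturing $\{P[\mu_n](z(I))\ge n\}$ forces $|B(z(J_n))|\le e^{-n^2}$ and hence a zero near $z(J_n)$, while $P[\mu](z(J_n))\to 0$; this is the contradiction. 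Your sketch is compatible with this but does not constitute a proof.
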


In \cite{MoNi} it was proved that a closed set $E$ of the unit circle is porous if and only if for any singular inner function $S$ whose singular set is contained in $E$ and any $a \in \mathbb D \setminus \{0 \} $ the inner function $(S -a)/ (1- \overline{a} S)$ is a finite product of interpolating Blaschke products. 

Now we describe the critical decay rate of masses of atomic measures (with no restrictions on support) 
guaranteeing that the corresponding singular inner functions are easily wepable. 

\begin{theorem}
\label{thmatom} 
Let $(b_s)_{s\ge 1}$ be a non-increasing summable sequence of positive numbers. The following conditions are equivalent:

{\rm (a)} Every atomic singular inner function with point masses $(b_s)_{s\ge 1}$ is easily wepable;

{\rm (b)} 
\begin{equation}
\label{kl5}
b_s\asymp \sum_{k\ge s}b_k, \qquad s\ge 1.
\end{equation}
\end{theorem}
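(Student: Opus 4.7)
The plan is to prove the two implications separately, using throughout that \eqref{kl5} is equivalent to geometric decay of the tails $T_s := \sum_{k \ge s} b_k$, i.e.\ $T_{s+1} \le \lambda T_s$ for some $\lambda \in (0,1)$ and all $s$.

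For (b)$\Rightarrow$(a), given $\mu = \sum_s b_s \delta_{\xi_s}$ with arbitrary atom locations $(\xi_s) \subset \mathbb{T}$, I would define $B$ as the Blaschke product with zeros $z_s := (1-b_s)\xi_s$. The Blaschke sum converges since $\sum_s b_s < \infty$, and a direct Poisson kernel estimate yields $|S_\mu(z_s)| \le e^{-1/2}$, so $Z(B) \subset \{|S_\mu| < m\}$ with $m := e^{-1/2}$. The substantive work is to verify that $BS_\mu$ has the WEP. The key step is a dichotomy: if at $z = (1-r) e^{i\theta}$ the product $|S_\mu(z)| \cdot |B(z)|$ is small, then $z$ is pseudohyperbolically close to some $z_s$. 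To establish this I would decompose atoms into dyadic scales according to the size of $b_s$; the geometric decay of tails provided by \eqref{kl5} lets us sum the contributions of each scale to $\log|S_\mu(z)|^{-1}$ and isolate a ``dominant scale'' where $b_s \asymp r$, forcing an atom at angular distance $\lesssim r$ from $e^{i\theta}$, so that the corresponding $z_s$ is pseudohyperbolically close to $z$.

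For (a)$\Rightarrow$(b) I argue contrapositively. Failure of \eqref{kl5} means $\inf_{s > N} b_s/T_s = 0$ for every $N$, which allows one to select inductively $s_1 < s_2 < \cdots$ with $s_k > M_{k-1}$ (where $M_{k-1}$ is the least index with $T_{M_{k-1}+1} \le T_{s_{k-1}}/2$) and with $\eps_k := b_{s_k}/T_{s_k}$ taken so small that $\sum_k T_{s_k} \log(1/\eps_k) = \infty$. Construct $\mu$ by placing the atoms $\xi_j$ for $s_k \le j \le M_k$ uniformly spaced in an arc $J_k$ of length $r_k := T_{s_k}/N$, where $N$ is a large absolute constant to be fixed, the $J_k$ are pairwise far apart on $\mathbb{T}$, and the remaining atoms are placed away from all clusters. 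Suppose for contradiction that $S_\mu$ is easily wepable, witnessed by $m < 1$ and a Blaschke product $B$ with $Z(B) \subset \{|S_\mu| < m\}$ and $BS_\mu$ satisfying WEP with modulus $\eta$. Fix any $\eps_0 > 0$. Since $|B| \le 1$, the set $\{|S_\mu| < \eta(\eps_0)\}$ must be covered by the pseudohyperbolic $\eps_0$-disks about $Z(B)$. A Poisson estimate gives $|S_\mu(z)| \le e^{-cN}$ at every $z = (1-r)\xi$ with $\xi$ in a central subarc of $J_k$ and $r \in [b_{s_k}, r_k/2]$, where $c > 0$ is absolute; choosing $N$ so that $e^{-cN} < \eta(\eps_0)$, this whole ``slice'' must lie in the cover. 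A dyadic counting across the $\log(1/(N\eps_k))$ depth-scales between $b_{s_k}$ and $r_k/2$ shows that to cover the slice the Blaschke mass within the Carleson box $Q(J_k)$ must be at least $c' r_k \log(1/(N\eps_k))/\eps_0$. Summing over $k$ yields $\sum_n (1-|z_n|) \gtrsim \sum_k r_k \log(1/\eps_k) = \infty$, contradicting the Blaschke condition for $B$.

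I expect the main obstacle to be the WEP verification in the sufficiency direction: the dichotomy demands careful multiscale bookkeeping, matching angular and depth scales through \eqref{kl5} at every step, and controlling the Blaschke factor $|B(z)|$ at points far from $Z(B)$ requires a WEP-type lower bound for $B$ itself which has to be extracted from the geometric decay of tails. On the necessity side, a secondary delicate point is carrying out the inductive selection of $(s_k)$ so that both $\eps_k \to 0$ and $\sum T_{s_k} \log(1/\eps_k) = \infty$; this is possible because $\eps_s$ can be made arbitrarily small at indices beyond any given bound, but one must monitor that $T_{s_k}$ does not decrease much faster than geometrically along the chosen sequence.
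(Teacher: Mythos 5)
Both directions of your proposal contain genuine gaps; the paper's argument is structurally different in each.

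\textbf{(b)$\Rightarrow$(a).} Placing a single Blaschke zero $z_s=(1-b_s)\xi_s$ per atom cannot yield a WEP product, regardless of how the multiscale bookkeeping is done afterwards. Consider a measure with one atom of mass $b$ at $\xi_1=1$ and all remaining atoms at angular distance $\ge 1/10$ from $1$. For $z=(1-\delta b)\cdot 1$ with $\delta\to 0$, a direct Poisson estimate gives $\log|S_\mu(z)|^{-1}\asymp 1/\delta\to\infty$, while $\rho(z,z_1)=\frac{1-\delta}{1+\delta-\delta b}\to 1$ and $\rho(z,z_s)\to 1$ for every $s\ge 2$; so $|S_\mu(z)B(z)|\to 0$ along a sequence staying pseudohyperbolically far from $Z(B)$. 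The dichotomy you aim to prove is therefore false: smallness of $|S_\mu(z)|$ at depth $r$ does not force an atom with $b_s\asymp r$ nearby, because an atom of much larger mass above $z$ already makes $|S_\mu|$ tiny at all smaller depths. The paper repairs exactly this by filling in \emph{all} depths: for each dyadic $J$ it puts $k_J=\lfloor\phi(P[\mu](z(J)))\rfloor$ uniformly spaced zeros in $T(J)$, where $\phi(x)\asymp\log x$ at infinity, and then verifies a Carleson-type summation \eqref{kl2} using the Lipschitz estimate \eqref{kl1} for $\phi\circ\lambda$ and the near/far decomposition $\mu=\mu'+\mu''$ with \eqref{kl5} used crucially to control the near part. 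The growth $k_J\asymp\log P[\mu](z(J))$ is essential and cannot be replaced by one zero per atom.

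\textbf{(a)$\Rightarrow$(b).} Your target contradiction, divergence of $\sum(1-|z_n|)$, is too weak and in general unattainable. The inductive separation $T_{s_k}\le T_{s_{k-1}}/2$ you impose makes $\sum_k T_{s_k}<\infty$ geometrically, so $\sum_k r_k\log(1/\eps_k)=\infty$ would require $\log(1/\eps_k)$ to grow essentially like $2^k$; but for, e.g., $b_s=1/s^2$ (which violates \eqref{kl5}) one has $T_s\asymp 1/s$ and $\eps_s\asymp 1/s$, so along any admissible subsequence $T_{s_k}\asymp 2^{-k}$ forces $s_k\asymp 2^k$ and hence $\log(1/\eps_k)\asymp k$, making the sum converge. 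So your construction produces no contradiction. The paper's contradiction is not with the Blaschke condition but with the \emph{easily wepable} constraint $Z(B)\subset\{|S_\mu|<m\}$ itself: failure of \eqref{kl5} yields nearly flat stretches $b_{s_n}\le 2b_{s_n+n}$, and Lemma~\ref{kl6} turns each such stretch into a cluster measure $\mu_n$ supported in an arc $J_n$ with the crucial smallness $\|\mu_n\|=o(|J_n|)$, while still forcing $|B_\Lambda(z(J_n))|\le e^{-n^2}$ for any $B_\Lambda$ that covers the high-Poisson region of $\mu_n$. Thus $B$ must have a zero in $T(J_n)$ although $P[\mu](z(J_n))\to 0$, i.e.\ $|S_\mu|\to 1$ there, contradicting the requirement that all zeros of $B$ lie where $|S_\mu|<m<1$. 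In your construction $\mu(J_k)\asymp|J_k|$, so $P[\mu](z(J_k))$ stays bounded below, and this mechanism is unavailable.
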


Note that given any decreasing sequence of masses, they can give rise to a measure $\mu$ with easily
wepable singular function $S_\mu$, simply by locating the masses at points of the form $\exp (i2^{-n})$,
for instance, and applying Theorem \ref{thmpor}.
It would be interesting to have a similar statement to Theorem \ref{thmatom} with ``wepable" instead of
``easily wepable"; in particular to know whether any condition weaker than \eqref{kl5} can imply automatic wepability, and what rate of decrease of the $(b_s)$ guarantees that there always
exists some choice of location of the point masses with produces a non-wepable $S_\mu$. In particular, 
the construction in the proof of \cite[Proposition 6]{Bo} shows that there exists a non-wepable atomic singular inner function as soon as the point masses decay no more rapidly than 
$1/(n(\log n)^2)$, $n\to\infty$.

\subsection{Organization of the paper.}
In Section~\ref{proofLm1},
we prove Lemmas~\ref{L1} and \ref{lempor},
and therefore the necessity part of Theorem~\ref{thment}.  In Section~\ref{proofTh1},
we prove the remaining part of Theorem~\ref{thment} and Corollary~\ref{regular}.  In Section~\ref{new},
we give the proof of Theorem~\ref{thmatom}.  In Section~\ref{proofpor},
we give the proof of Theorem~\ref{thmpor}, which deals with a situation where
the entropy of the singular set is very well controlled.  Finally, the proofs of 
Propositions~\ref{deltaseq} and \ref{cn}, which are quite independent from the rest, appear in Section~\ref{proofcsts}.

The letter $C$ will denote a constant whose value may change from line to line.

We denote by $\mathcal D=\cup_{n\ge 0} \mathcal D_n$ the family of the dyadic arcs, with
$\mathcal D_n=\{J\subset \mathcal D:|J|=2^{-n}\}$. Note that $\card \mathcal D_n=2^n$.

Given an arc $J \subset\mathbb T$ of center $\xi$ and length $|J|$ and $M>0$ let $MJ$ be the arc of the unit circle of center $\xi$ and length $M|J|$.

Let $z,w\in\mathbb D$. Later on, we use the following standard estimates:
\begin{gather}
\label{kl8} \frac12\cdot\frac{(1-|z|^2)(1-|w|^2)}{|1-\bar w z|^2}\le \log\Bigl|\frac{1-\bar w z}{z-w}\Bigr|,\\
\label{kl9} \log\Bigl|\frac{1-\bar w z}{z-w}\Bigr| \le C(\delta)\frac{(1-|z|^2)(1-|w|^2)}{|1-\bar w z|^2},\qquad \rho(z,w)\ge \delta>0.
\end{gather}

\subsection*{Acknowledgments}
We are grateful  to Nikolai Nikolski for stimulating discussions. 

This work was initiated in 2011 when the third author was invited by the Centre de Recerca Matem\`atica in the framework of the thematic semester on Complex Analysis and Spectral Problems.

\section{Proofs of Lemmas~\ref{L1} and \ref{lempor}}
\label{proofLm1}

\begin{proof}[Proof of Lemma~\ref{L1}]
(a)$\Leftrightarrow$(b) Let $\mathbb T\setminus E$ be the disjoint union of the arcs $I_k$, $k\ge 1$. Suppose first that $|E|=0$.
Then
\begin{multline*}
\sum_{J\in \mathcal D,\,J\cap E\not=\varnothing}|J|=
\sum_{k\ge 1}\sum_{J\in \mathcal D,\,J\cap E\not=\varnothing,\,J\cap I_k\not=\varnothing}|J\cap I_k|\\ =
\sum_{k\ge 1}\sum_{n\ge 0}\sum_{J\in \mathcal D_n,\,J\cap E\not=\varnothing,\,J\cap I_k\not=\varnothing}|J\cap I_k|\\ \asymp
\sum_{k\ge 1}\Bigl(\sum_{0\le n\le \log(1/|I_k|)}|I_k|+\sum_{n>\log(1/|I_k|)}2^{-n}\Bigr)\asymp \mathcal E(E).
\end{multline*}
Next, if $|E|>0$, then
$$
\sum_{J\in \mathcal D,\,J\cap E\not=\varnothing}|J|=\infty.
$$

(b)$\Rightarrow$(c) Arguing as above,
\begin{multline*}
\sum_{J\in \mathcal D,\,P[\mu](z(J))\ge C}|J| = \sum_{k\ge 1}\sum_{n\ge 0}\sum_{J\in \mathcal D_n,\,P[\mu](z(J))\ge C,\,J\cap I_k\not=\varnothing}|J\cap I_k|  \\
\lesssim \sum_{k\ge 1}\Bigl(\sum_{0\le n\le 2\log(1/|I_k|)}|I_k|+\sum_{n>2\log(1/|I_k|)}2^{-n/2}\Bigr)\asymp \mathcal E(E).
\end{multline*}

(c)$\Rightarrow$(a) If $|E|>0$, then we can just take $\mu_0=K\chi_E \,dm$ with $K$ to be chosen later on. By the Lebesgue density theorem, for a subset
$E_1$ of $E$, $|E_1|\ge |E|/2$ and for some $\delta>0$ we have
$$
\frac{|E\cap J|}{|J|}\ge \frac12
$$
for every arc $J$ such that $J\cap E_1\not=\varnothing$, $|J|\le \delta$. Hence, for $K\ge K(C)$ we obtain
$$
\sum_{J\in\mathcal D,\, P[\mu_0](z(J))\ge C}|J|\ge \sum_{J\in\mathcal D,\, |J|\le\delta,\,J\cap E_1\not=\varnothing}|J|=\infty.
$$
Next, we can replace $\mu_0$ by a Cantor type singular measure $\mu_1$ while keeping the sum
$$
\sum_{J\in\mathcal D,\, P[\mu_1](z(J))\ge C}|J|
$$
infinite.

Now, suppose that (c) holds and $|E|=0$, $\mathcal E(E)=\infty$, so that (a) and, hence, (b) do not hold. 
Let $\mathbb T\setminus E$ be the disjoint union of the arcs $I_k=(a_k,b_k)$, $k\ge 1$,
and take
$$
\mu=K\sum_{k\ge 1}|I_k|(\delta_{a_k}+\delta_{b_k})
$$
with $K$ to be chosen later on. Given $J\in\mathcal D$, if  $J\cap E\not=\varnothing$, then $\mu(J)\ge K|J|$, and for $K\ge K(C)$ we have
$P[\mu](z(J))\ge C$. Therefore,
$$
\sum_{J\in\mathcal D,\, P[\mu](z(J))\ge C}|J|\ge \sum_{J\in\mathcal D,\, J\cap E\not=\varnothing}|J|=\infty. 
$$
\end{proof}

\begin{proof}[Proof of Lemma~\ref{lempor}]
(a)$\Rightarrow$(b) If $E$ is porous, then there exists $a\in\mathbb N$ such that for every $n\ge 0$, $J\in\mathcal D_n$, and for every $m\ge 0$, the set $J\cap E$ is covered by 
$2^{m-s}$ arcs $I\in \mathcal D_{n+m}$, $sa\le m<(s+1)a$. Fix $J\in \mathcal D$, $|J|=2^{-n}$. Then 
\begin{gather*}
\sum_{I\in\mathcal D,\,I\subset J,\, I\cap E\not=\varnothing} |I|\le \sum_{m\ge n}\sum_{I\in\mathcal D_m,\,I\subset J,\, I\cap E\not=\varnothing} |I| \\  \le 
\sum_{s\ge 0}\sum_{sa\le m<(s+1)a} 2^{m-s}2^{-n-m}=\sum_{s\ge 0}a2^{-n-s}=2a|J|.
\end{gather*}

(b)$\Rightarrow$(a) If $E$ is not porous, then for every $N\ge 1$ there exist $n\ge 0$, $J\in\mathcal D_n$ such that if $I\in \mathcal D_{n+N}$, $I\subset J$, then $I\cap E\not=\varnothing$. 
Then
\begin{gather*}
\sum_{I\in\mathcal D,\,I\subset J,\, I\cap E\not=\varnothing} |I|\ge \sum_{n\le s\le n+N}\sum_{I\in\mathcal D_s,\,I\subset J,\, I\cap E\not=\varnothing} |I|\\ =
\sum_{n\le s\le n+N} |J|=(N+1)|J|.
\end{gather*}

(c)$\Rightarrow$(a) As above, if $E$ is not porous, then for every $N\ge 1$ we can find $J=[2\pi \cdot k2^{-n},2\pi \cdot(k+1)2^{-n})\in\mathcal D_n$ 
and points $x_s\in E\cap [2\pi \cdot(k2^N+s)2^{-n-N},2\pi \cdot(k2^N+s+1)2^{-n-N})$, $0\le s<2^N$.
Set
$$
\mu=10\cdot 2^{-n-N}A\sum_{0\le s<2^N}\delta_{x_s}.
$$
Then
$$
P[\mu](z(I))\ge A, \qquad I\in\mathcal D_m,\, n\le m\le n+N,\, I\subset J,
$$
and
$$
\sum_{I\in\mathcal D,\, P[\mu](z(I))\ge A} \frac{|I||J|}{|1- \overline{z(I)} z(J)|^2} \gtrsim 
\sum_{n\le m\le n+N} 2^{m-n}\cdot\frac{2^{-m}2^{-n}}{2^{-2n}}=N+1.
$$
For large $N$ this contradicts to (c), because 
$$
P[\mu](z(J))\le CA.
$$

To complete the proof of our lemma, we need an auxiliary statement. 

\begin{lemma}
\label{kl7}
Let $u$ be a function positive and harmonic on the unit disc, let $A>0$, and let $\mathcal G$ be a subfamily of $\mathcal D$ such that 
the arclength $ds$ on $L=\cup_{J\in\mathcal G}\partial T(J)$ is a Carleson measure,
$$
\sup_{z\in\mathbb D}\sum_{J\in\mathcal G}\int_{\partial T(J)}\frac{1-|z|^2}{|1-\bar w z|^2}\,ds(w)\le B.
$$ 
Assume that $u\ge A$ on $L$. Then for every $J\subset \mathcal D$ we have 
$$
\sum_{I\in\mathcal G}\frac{|I||J|}{|1- \overline{z(I)} z(J)|^2}\le \frac{CB}{A}u(z(J))
$$
for some absolute constant $C$.
\end{lemma}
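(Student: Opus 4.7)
The plan is to reduce the discrete sum to a bounded positive harmonic function $f$ on $\mathbb{D}$, and then to sandwich both $f$ and $u$ between multiples of the harmonic measure of the obstacle $E := \bigcup_{I\in\mathcal{G}} \overline{T(I)}$. First I would observe that on each $\partial T(I)$ the harmonic function $w\mapsto K(z,w) := (1-|z|^2)/|1-\bar w z|^2$ stays at bounded pseudohyperbolic distance from $z(I)$, so by Harnack $K(z,w)\asymp K(z,z(I))$ uniformly there, and since $|\partial T(I)|\asymp|I|$,
\[
\int_{\partial T(I)} K(z,w)\,ds(w) \asymp K(z,z(I))\,|I|.
\]
Thus $f(z) := \sum_{I\in\mathcal{G}} K(z,z(I))\,|I|$ is positive and harmonic on $\mathbb{D}$, the hypothesis gives $f \le C_0 B$ everywhere on $\mathbb{D}$, and at $z=z(J)$ this $f$ agrees up to a multiplicative constant with the left-hand side of the desired inequality.

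Next I would introduce $\omega\colon\mathbb{D}\to[0,1]$, the harmonic measure of $E$ in $\mathbb{D}$: $\omega\equiv 1$ on $E$ and, on $\mathbb{D}\setminus E$, $\omega$ is the bounded harmonic function with boundary values $1$ on $\partial E$ and $0$ on $\mathbb{T}$. The aim is then to establish the two comparisons $u \ge A\omega$ and $f \le C_0 B\omega$ throughout $\mathbb{D}$. The first is routine: since $u\ge A$ on each $\partial T(I)$, the minimum principle gives $u\ge A=A\omega$ on $E$; on $\mathbb{D}\setminus E$ the harmonic function $u - A\omega$ has nonnegative boundary values on $\partial E\cup\mathbb{T}$ and is therefore nonnegative. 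Combining both inequalities gives
\[
f(z(J)) \le C_0 B\,\omega(z(J)) \le \frac{C_0 B}{A}\,u(z(J)),
\]
which, together with the first step, is exactly the conclusion of the lemma.

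The main obstacle is the second comparison $f \le C_0 B\omega$, because $E$ may accumulate on $\mathbb{T}$ at limit points of $\{z(I)/|z(I)|\}_{I\in\mathcal{G}}$, and the boundary behavior of $f$ and $\omega$ there is subtle. I would handle this by truncation: for $N\ge 1$ set $\mathcal{G}_N := \{I\in\mathcal{G} : |I|\ge 2^{-N}\}$, $E_N := \bigcup_{I\in\mathcal{G}_N}\overline{T(I)}$, and let $\omega_N$, $f_N$ be defined as above relative to $\mathcal{G}_N$. Since $\mathcal{G}_N$ is finite, $E_N$ is compactly contained in $\mathbb{D}$, so both $\omega_N$ and $f_N$ extend continuously to $\overline{\mathbb{D}}$ with value $0$ on $\mathbb{T}$; applying the maximum principle to $C_0 B\omega_N - f_N$ on $\mathbb{D}\setminus E_N$ (where the difference is nonnegative on $\partial E_N$ because $f_N\le C_0 B$, and vanishes on $\mathbb{T}$) yields $f_N \le C_0 B\omega_N$ throughout $\mathbb{D}$. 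Letting $N\to\infty$ and using the monotone convergences $\omega_N\uparrow\omega$ and $f_N\uparrow f$ concludes the proof.
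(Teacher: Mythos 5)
Your reduction to the comparison at $z(J)$ is the right idea, and the estimate $\int_{\partial T(I)} K(z,w)\,ds(w) \asymp K(z,z(I))\,|I|$ and the inequality $u \ge A\,\omega$ are both correct. However, there is a fatal error at the heart of the argument: the function $K(z,w) = (1-|z|^2)/|1-\bar w z|^2$ is \emph{not} harmonic in $z$ for $w$ in the open disc (nor in $w$ for fixed $z$). Indeed, $K(z,w) = \bigl(1 - |\phi_w(z)|^2\bigr)/(1-|w|^2)$ where $\phi_w$ is the M\"obius transformation fixing $w$, and since $\zeta\mapsto 1-|\zeta|^2$ has Laplacian $-4$, a direct computation gives $\Delta_z K(z,w)\big|_{z=0} = -4(1-|w|^2) < 0$; by M\"obius invariance $K(\cdot,w)$ is strictly superharmonic on all of $\D$. (It coincides with the Poisson kernel only when $|w|=1$.) Consequently $f(z) = \sum_{I\in\mathcal G} K(z,z(I))\,|I|$ is superharmonic but nowhere harmonic.

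This destroys the key step $f \le C_0 B\,\omega$. On $\D\setminus E_N$ the function $C_0 B\,\omega_N - f_N$ is the difference of a harmonic function and a superharmonic one, hence \emph{subharmonic}; the maximum principle for subharmonic functions gives an upper bound by the boundary supremum, not the lower bound $\ge 0$ you need, and a subharmonic function with nonnegative boundary values can certainly be negative inside (e.g.\ $|z|^2-1$). So the truncation argument does not close. The paper sidesteps exactly this difficulty by replacing $f$ with the Green potential
$$
h(z) = \sum_{J\in\mathcal G}\int_{\partial T(J)} \log\Bigl|\frac{1-\bar w z}{z-w}\Bigr|\,\frac{ds(w)}{1-|w|},
$$
which is superharmonic on $\D$ but \emph{harmonic on $\D\setminus L$} because its Riesz measure is concentrated on $L$, and which is two-sidedly comparable to your $f$ via the elementary estimates \eqref{kl8}--\eqref{kl9}. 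With that choice, $\frac{CB}{A}u - h$ is harmonic on $\D\setminus L$, the boundary inequalities you wrote carry over, and the minimum principle (domination principle for Green potentials) applies legitimately. If you want to keep the harmonic-measure framing, the fix is to run your truncation argument with $h_N$ in place of $f_N$ and then convert $h(z(J))$ back to the Carleson sum using \eqref{kl8}.
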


\begin{proof} Consider the function 
$$
h(z)=\sum_{J\in\mathcal G}\int_{\partial T(J)}\log\Bigl|\frac{1-\bar w z}{z-w}\Bigr|\,\frac{ds(w)}{1-|w|}.
$$
It is harmonic on $\mathbb D\setminus L$ and by \eqref{kl9},  
$$
CBu(z)\ge Ah(z) 
$$
for $z\in \mathbb T\cup L$. By the maximum principle, 
$$
CBu(z(J))\ge Ah(z(J)),
$$
and hence, by \eqref{kl8}, 
$$
C_1Bu(z(J))\ge A\sum_{I\in\mathcal G}\frac{|I||J|}{|1- \overline{z(I)} z(J)|^2}.
$$
\end{proof}

It remains to prove the implication (a)$\Rightarrow$(c). Set $u=P[\mu]$. Arguing as in the part 
(a)$\Rightarrow$(b) we obtain that 
$$
\sum_{I\in \mathcal D,\,I\subset J,\,3I\cap E\not=\varnothing}|I|\le C|J|,\qquad  J\subset \mathcal D.
$$
Hence, the arclength $ds$ on $\cup_{I\in \mathcal D,\,3I\cap E\not=\varnothing}\partial T(I)$ is a Carleson measure.
Fix $A>0$ and denote by $\mathcal G$ the family of all $I\in\mathcal D$ such that $3I\cap E\not=\varnothing$ and $u(z(I))\ge A$.
Fix $J\in\mathcal D$. Applying Lemma~\ref{kl7}, we obtain that 
$$
u(z(J))\ge CA\sum_{I\in\mathcal G}\frac{|I||J|}{|1- \overline{z(I)} z(J)|^2}.
$$
Now we need only to estimate 
$$
\sum_{I\in \mathcal D,\,3I\cap E=\varnothing,\,u(z(I))\ge A}\frac{|I||J|}{|1- \overline{z(I)} z(J)|^2}.
$$

We set
$$
\mathcal H=\{I\in\mathcal D:3I\cap E=\varnothing,\,u(z(I))\ge A\}
$$
and 
$$
\mathcal H_k=\{I\in\mathcal H:2^{k-1}A\le u(z(I))< 2^kA\},\qquad k\ge 1,
$$
so that $\mathcal H=\cup_{k\ge 1}\mathcal H_k$.
If $I\in\mathcal H$, then an easy estimate of the Poisson integral shows that 
$$
\frac{u(z(L))}{u(z(I))}\asymp \frac{|L|}{|I|},\qquad L\in\mathcal D,\, L\subset I.
$$
Hence, every arc $I\in\mathcal H_k$, $k\ge 1$, contains at most $C$ subarcs $I'\in\mathcal H_k$. Therefore, 
the arclength $ds$ on $\cup_{I\in\mathcal H_k}\partial T(I)$ is a Carleson measure with Carleson constant uniformly bounded in $k\ge 1$. 
Lemma~\ref{kl7} gives now that 
$$
2^{-k}u(z(J))\ge CA\sum_{I\in\mathcal H_k}\frac{|I||J|}{|1- \overline{z(I)} z(J)|^2}.
$$
Summing up in $k\ge 1$ we complete the proof.
\end{proof}

\section{Proof of Theorem 1 and Corollary 1}
\label{proofTh1}

The proof of the sufficiency of Theorem 1 uses the following auxiliary result.

\begin{lemma}
\label{intjn}
Let $E$ be a closed subset of the unit circle of zero length and finite entropy.
Let ${\mathcal G} = \{J_n \}$ be the family of maximal dyadic subarcs $J_n \subset\mathbb T$
such that $2 J_n \subset\mathbb T\setminus E$. Then

{\rm (a)} The interiors of $J_n$ are pairwise disjoint and $\mathbb T\setminus E = \cup J_n$.

{\rm (b)} We have $\sum_n |J_n| \log |J_n|^{-1} < \infty$.

{\rm (c)} If $J_n$ and $J_m$ are in the same connected component of $\mathbb T\setminus E $ and $\overline{J_n} \cap \overline{J_m} \neq \varnothing$, then $4 |J_n| \ge |J_m| \ge |J_n| /4$.

{\rm (d)} Let $J$ be a connected component of $\mathbb T\setminus E$ and consider the subfamily ${\mathcal G}(J) = \{L_n \}$ of the arcs $L \in {\mathcal G}$ with $L \subset J $ ordered so that $|L_{n+1}| \le |L_n|$ for any $n$. Then $|L_1| \ge |J|/8$, $|L_k| / 4 \le |L_{k+1}| \le |L_k|$ and $|L_{k+4}| \le |L_k|/2$ for any $k\ge 1$.
\end{lemma}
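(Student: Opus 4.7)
The plan is: part (a) follows from maximality and the openness of $\mathbb T \setminus E$; part (c) is a direct dyadic computation; part (d) follows from (c) together with a counting bound on arcs of each fixed length; part (b) is then obtained by summing the entropy of $\mathcal G(J)$ over the connected components $J$ of $\mathbb T \setminus E$. For (a), two distinct maximal dyadic arcs cannot be nested, so their interiors are disjoint. Given $x \in \mathbb T \setminus E$, openness yields $2D \subset \mathbb T \setminus E$ for every sufficiently small dyadic ancestor $D \ni x$, while this fails for sufficiently large ancestors; the largest dyadic ancestor for which $2D \subset \mathbb T \setminus E$ still holds is a maximal arc in $\mathcal G$ containing $x$.

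For (c), let $J_n, J_m \in \mathcal G$ lie in the component $J = (a, b)$ of $\mathbb T\setminus E$, share an endpoint, and assume $|J_n| \le |J_m|$. Place the shared endpoint at $0$ with $J_n$ on the left; the dyadic structure then forces $J_n = [-|J_n|, 0]$ to be the right child of $J_n^* = [-2|J_n|, 0]$, and $J_m = [0, |J_m|]$ to be the left child of $J_m^* = [0, 2|J_m|]$. From $2J_m \subset J$ we obtain $a < -|J_m|/2$ and $b > 3|J_m|/2 > |J_n|$, so the point $|J_n|$ (the right endpoint of $2J_n^* = [-3|J_n|, |J_n|]$) lies in $J$. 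Maximality of $J_n$ then forces the \emph{left} endpoint $-3|J_n|$ of $2J_n^*$ to lie outside $J$, giving $a \ge -3|J_n|$. Combining $-3|J_n| \le a < -|J_m|/2$ yields $|J_m| < 6|J_n|$; since $|J_m|/|J_n|$ is a power of $2$, we conclude $|J_m| \le 4|J_n|$.

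For (d), part (d1) is obtained by choosing the unique dyadic level $k$ with $2^{-k} \in (|J|/8, |J|/4]$: a dyadic arc $L = [j 2^{-k}, (j+1)2^{-k}]$ at this level satisfies $2L \subset J$ whenever $j$ lies in the open interval $(a 2^k + 1/2, b 2^k - 3/2)$, which has length $|J| 2^k - 2 \ge 2$ and therefore contains an integer; the maximal dyadic ancestor of such $L$ inside $J$ belongs to $\mathcal G(J)$ and has length at least $2^{-k} > |J|/8$. Part (d2) follows from (c): if $|L_{k+1}| < |L_k|/4$, then the arcs of $\mathcal G(J)$ listed in positional order split into two nonempty groups of sizes $\ge |L_k|$ and $\le |L_{k+1}|$, so some positionally adjacent pair has size ratio greater than $4$, contradicting (c). For (d3), it suffices to show that $\mathcal G(J)$ contains at most four arcs of any fixed length $2^{-\ell}$; writing $L = [j 2^{-\ell}, (j+1)2^{-\ell}]$, the conditions $2L \subset J$ and $2L^* \not\subset J$ force $j$ to lie in $(a 2^\ell + 1/2, a 2^\ell + 2]$ or in $[b 2^\ell - 3, b 2^\ell - 3/2)$, each an interval of length $3/2$ containing at most two integers. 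Hence five consecutive entries of equal size in the sorted list are impossible, and $|L_{k+4}| \le |L_k|/2$.

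For (b), applying (d1) and (d3) componentwise, the sorted sizes in each connected component $I$ of $\mathbb T \setminus E$ satisfy $|L_j| \le (|I|/2) \cdot 2^{-\lfloor (j-1)/4 \rfloor}$, so a geometric series estimate gives
$$
\sum_{L \in \mathcal G(I)} |L| \log |L|^{-1} \le |I| \log |I|^{-1} + C|I|.
$$
Summing over the components of $\mathbb T \setminus E$ and using $\mathcal E(E) < \infty$ together with $|\mathbb T \setminus E| \le 1$ yields (b). The main obstacle is the bookkeeping in (c): one must correctly identify which endpoint of $2 J_n^*$ lies outside $J$, and this identification relies crucially on the presence of $J_m$ on the opposite side of the common endpoint to rule out the alternative.
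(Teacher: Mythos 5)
Your proof is correct and, except for part (c), it follows essentially the same route as the paper, only with the details spelled out. In particular, your explicit bound that $\mathcal G(J)$ contains at most four dyadic arcs of any fixed length $2^{-\ell}$ (obtained by intersecting the constraints $2L\subset J$ and $2L^*\not\subset J$) makes precise what the paper's terse proof of (d) asserts without justification, namely that among five equal-length arcs one must have a parent $L$ with $2L\subset\mathbb T\setminus E$.

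For part (c) the paper proceeds quite differently and more cleanly: assuming WLOG $|J_n|\ge |J_m|$, it considers the dyadic arc $J$ of length $|J_n|/4$ adjacent to $J_n$ on the side of $J_m$, notes that $2J\subset 2J_n\subset\mathbb T\setminus E$, and concludes by maximality that $J\subset J_m$, whence $|J_m|\ge |J_n|/4$; no coordinates and no discussion of parent arcs are needed. Your coordinate argument also works, but carries an unstated caveat: the assertion that ``the dyadic structure forces $J_n=[-|J_n|,0]$ to be the right child of $J_n^*=[-2|J_n|,0]$'' requires the \emph{strict} inequality $|J_n|<|J_m|$. Indeed, only then is the common endpoint a multiple of $2^{-\ell_n+1}$, so that translating by $-p$ preserves the dyadic grid at the level of $J_n^*$. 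When $|J_n|=|J_m|$ the two arcs can be siblings, e.g.\ $J_n=[1/4,3/8)$ (the \emph{left} child of $[1/4,1/2)$) and $J_m=[3/8,1/2)$, so your identification of $J_n^*$ fails; the conclusion of (c) is of course trivial in that case, so the proof is easily repaired, but the dichotomy should be stated. The companion claim that $J_m$ is the left child of $J_m^*=[0,2|J_m|]$ is likewise not forced by the setup (and is never used in your argument), so it would be better dropped.
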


\begin{proof}
The maximality gives that the interiors of $J_n$ are pairwise disjoint. It is also clear that $\mathbb T\setminus E = \cup J_n$. 
Let us now prove (c). Assume that $|J_n| \ge |J_m|$. Since $2 J_n \subset\mathbb T\setminus E $, every dyadic arc $J$ adjacent to $J_n$ of length $|J|=|J_n|/4$ satisfies 
$2J \subset \mathbb T \setminus E $. 
Since $\overline{J_n} \cap \overline{J_m} \neq \varnothing$ we can assume that $J \cap J_m \neq \varnothing$. By maximality, $J \subset J_m$. Hence $|J_m| \ge |J|= |J_n|/4$ and (c) is proved. 
Let us now prove (d). The maximality gives that $|L_1| \ge |J|/8$ and (c) gives that $|L_k|/4 \le |L_{k+1}|$. 
Assume that $|L_{k+4}| > |L_k|/2$. Then $|L_{k+i}|=|L_k|$, $0\le i\le 4$. Then there would exist $i\in\{0,1,2,3,4\}$ such that the dyadic arc $L$ containing $L_{k+i}$ of length $|L|=2|L_{k+i}|$ satisfies $2L \subset\mathbb T \setminus E $. 
This would contradict the maximality of $L_{k+i}$ and (d) is proved. Finally, let us prove (b). Take a connected component $J$ of $\mathbb T\setminus E$. The estimates in (d) give that
$$
\sum_{L \in {\mathcal G}(J)} |L| \log |L|^{-1} \le C |J| \log |J|^{-1},
$$
and (b) follows because $E$ has finite entropy.
\end{proof}

\begin{proof}[Proof of Theorem \ref{thment}]
The necessity part follows from Lemma \ref{L1} and property \eqref{2}.

Let us now prove the sufficiency part. Let $\mu$ be a positive singular measure whose support $E$ has zero length and finite entropy. Let ${\mathcal G}$ be the family given by Lemma
\ref{intjn} of maximal dyadic arcs of the unit circle whose double is contained in
$\mathbb T\setminus E$. Let $\mathcal F$ be the family of dyadic arcs of
$\mathbb T$ which are not contained in any arc of ${\mathcal G}$. Then
\begin{equation}
\sum_{J \in \mathcal F} |J| < \infty.
\label{e4}
\end{equation}
Indeed, for every $J \in \mathcal F$, $2J$ is the union of four dyadic intervals of the same length such that at least one of them intersects $E$. 
Since $E$ has zero length and finite entropy, Lemma~\ref{L1} gives that
$$
\sum_{J \in \mathcal F} |J|\le \sum_{J \in \mathcal F} |2J|\lesssim \sum_{J \in \mathcal D,\, J \cap E \ne \varnothing} |J| < \infty.
$$
%On the other hand, applying part (d) of Lemma \ref{intjn}, there exists a constant $C>0$ such that for any connected component $J$ of $\partial \mathbb D \setminus E$ one has
%$$
%\sum_{L \in {\mathcal F} , L \subset J} |L| < C |J|.
%$$
%Hence (\ref{e4}) holds. 
We want to find a Blaschke product $B$ such that $SB$ satisfies the WEP.
We will describe now the first family of zeros of $B$. By \eqref{e4},
one can pick a sequence of integers $k_j$ increasing to infinity such that
$$
\sum_{j=1}^{\infty}  k_j \sum_{J \in \mathcal F\cap\mathcal D_j} |J| < \infty.
$$
Next, for each $J\in\mathcal F\cap\mathcal D_j$, we choose a set $\Lambda(J)=\{z_n (J) : 1\le n\le k_j \}$ uniformly distributed in $T(J)$.  
Uniform distribution means that 
$$
\min_{z,w\in \Lambda(J),\, z\not=w}|z-w|\asymp \max_{z\in T(J)}\dist(z,\Lambda(J))\asymp \min_{z\in\Lambda(J)}\dist(z,\partial T(J)).
$$
%(recall that $T(J)$ and $Q(J)$ were defined
%before Lemma \ref{L1}).
% = \{r \xi : \xi \in J, |J|/2 <1-r< |J|\}$. We also use the notation $Q(J) = \{r \xi : \xi \in J, 0 <1-r< |J|\}$.
Let $B_1$ be the Blaschke product with zeros $Z(B_1) =\cup_{J\in\mathcal F} \Lambda(J)$.
Since $k_j$ tends to infinity, for any $0< \delta <1$ there exists $r=r(\delta) < 1$ such that
\begin{equation}
\{z \in \mathbb D : \rho (z, Z(B_1)) > \delta \} \subset \{z \in \mathbb D : |z|<r \} \bigcup {\cup}_{J \in {\mathcal G}} Q(J).
\label{e5}
\end{equation}
Next we will construct certain additional zeros of $B$ which are contained in $\cup_{J\in\mathcal G}Q(J)$. Set $f=S B_1$. Since $\inf_{J\in\mathcal G} \rho(z(J), Z(B_1)) >0$,
we have $\log |f(z(J))|^{-1} \lesssim (1- |z(J)|)^{-1}$, $J\in\mathcal G$.
Now, applying Lemma~\ref{intjn}~(b), we obtain 
$$%\begin{equation}
\sum_{J\in\mathcal G} |J| \log \log |f(z(J))|^{-1} < \infty.
%\label{e6}
$$%\end{equation}
Next, for every $J\in\mathcal G$ and every $z \in Q(J)$ with $1-|z|<|J|/8$, we have 
\begin{equation}
\log|f(z)|^{-1} \lesssim \frac{1-|z|}{1-|z(J)|} \log |f(z(J))|^{-1}.
\label{e7}
\end{equation}
To prove \eqref{e7} consider the measure $d \sigma = d \mu + \sum_{z\in Z(B_1)} (1-|z|^2) \delta_z$, 
where %the last sum is taken over all zeros $z$ of $B_1$ and 
$d\mu$ is the positive singular measure
associated to $S$. Fix for a moment $J \in \mathcal G$. There exists a constant $C>0$
such that if $z \in Q(J)$ and $0< 1-|z| < |J|/ 8$, then $\rho(z, Z(B_1))\ge C$.
Then, by \eqref{kl9}, there exists a constant $C_1 >0$ such that
$$
\log |f(z)|^{-1} \le C_1 \int_{\overline{\mathbb D}} \frac{1-|z|^2}{|1- \overline{w} z|^2} d \sigma (w).
$$
Applying Lemma~\ref{intjn}~(c) and using that the support of $\mu$ does not intersect $2J$,  
for every $w$ in the support of $\sigma$ and every $z \in Q(J)$ we obtain that $|1- \overline{w} z| \ge C_2 |1 - \overline{w} z(J) |$, where $C_2>0$ is a constant. Hence,
$$
\log |f(z)|^{-1} < \frac{C_1 (1-|z|^2)}{C_2^2 (1-|z(J)|^2)}  \int_{\overline{\mathbb D}} \frac{1-|z(J)|^2}{|1- \overline{w} z(J)|^2} d \sigma (w)  .
$$
By \eqref{kl8}, this integral is bounded by a fixed multiple of  $\log|f(z(J))|^{-1}$, and estimate \eqref{e7} follows.

In particular, for any constant $K>0$ there exists $K_1 = K_1 (K)>0$ such that for every
$J\in\mathcal G$ we have 
\begin{equation}
|f(z)| > K_1 \quad \text{if} \quad z \in Q(J) ,\, 1-|z| < \frac{K|J|}{\log|f(z(J))|^{-1}}.
\label{e8}
\end{equation}
Let $M(J)=\log_2 |J|^{-1}$. Pick a sequence of positive integers $t_k$ increasing to infinity and such that $t_{k+1}\le t_k+1$ and 
$$
\sum_{J\in\mathcal G} |J| \sum_{k=1}^{M(J)} t_k < \infty.
$$
For $1\le k\le M(J)$ consider the strips
$$
\Omega_k = \Omega_k(J) = \{z \in Q(J) : 2^{k-1} |J|^2 < 1- |z| \le 2^{k} |J|^2 \}
$$
and choose sets $\Lambda_k(J)=\{z_j (\Omega_k)\}_{1\le j\le s_k}$ of $s_k= t_k 2^{M(J) -k}$ points uniformly distributed in $\Omega_k$. Let $\Lambda (J) = \cup_{1\le k\le M(J)}\Lambda_k(J)$. 
Then
$$
\sum_{z\in\Lambda(J)}(1-|z|)=\sum_{k=1}^{M(J)} \sum_{j=1}^{s_k} (1-|z_j (\Omega_k)|) \le |J| \sum_{k=1}^{M(J)} t_k.
$$
Thus, the set $\cup_{J\in\mathcal G}\Lambda(J)$ satisfies the Blaschke condition. Let $B_2$ be the Blaschke product with these zeros.

We will now show that $S B_1 B_2$ satisfies the WEP.
Fix $\eta >0$ and take $z \in \mathbb D$ such that $\rho (z , Z(B_1 B_2)) > \eta$.
Applying \eqref{e5} we obtain that either $|z| \le r(\eta)<1$ or $z \in Q(J)$ for some $J\in\mathcal G$.
In the first case $|S(z) B_1 (z) B_2 (z)| > \varepsilon (\eta) >0$.
Assume that $z \in Q(J)$  for some fixed $J\in\mathcal G$. Since $\lim_{k\to\infty} t_k = \infty$ and $\rho (z , Z(B_2)) > \eta$,
there exists $C_1 = C_1 (\eta) >0$ such that $1-|z| < C_1 |J|^2 $.
Furthermore, $z\in\cup_{0\le k\le k(\eta)}\Omega_k(J)$, where 
$$
\Omega_0(J) = \{z \in Q(J) : 1-|z| \le |J|^2 \}.
$$
Since $\log|f(z(J))|^{-1} < C/|J|$, applying \eqref{e8} we obtain that $|f(z)|\ge C_2$ for some $C_2=C_2(\eta)>0$. 
Let $J_-$ and $J_+$ be two arcs of the family $\mathcal G$ contiguous to $J$. Factor $B_2 = B_3 B_4$, where $B_3$ is the Blaschke product with zeros 
$\Lambda (J_-) \cup \Lambda (J) \cup \Lambda (J_+)$. By Lemma~\ref{intjn}~(c), $\dist(Z(B_4),Q(J))\gtrsim |J|$. Hence there exists a constant $C>0$ such that $|1- \overline{w}z| >C |J|$ for any zero $w$ of $B_4$. Therefore, by \eqref{kl9}, 
$$
\log |B_4(z)|^{-1} \le C_3 \sum_{w\in Z(B_4)} \frac{(1-|z|^2)(1-|w|^2)}{|1 - \overline{w}z|^2} \le C_4 \sum_{w\in Z(B_4)} (1-|w|^2).
$$
On the other hand, if we reorder the zeros of $B_3$ as $Z(B_3)=(z_s)_s$ and use that $\rho(z, Z(B_2)) > \eta$,
estimate \eqref{kl9} gives that there exists a constant $C_5 = C_5 (\eta ) >0$ such that
\begin{multline*}
\log|B_3 (z)|^{-1} \\ \le  C_5 \frac{\sum_{z_s\in Q(z)} (1-|z_s|)}{1-|z|} + C_5 \sum_{j=1}^{\infty} \frac{1}{2^{2j} (1-|z|) }\sum_{z_s\in 2^j Q(z) } (1-|z_s|), 
\end{multline*}
where $Q(z)=Q(\tilde J)$ is the Carleson box defined by the arc $\tilde J$ satisfying $z=z(\tilde J)$, $2^j Q(z)=Q(2^j \tilde J)$. 
%$Q(z)= \{ r e^{i \theta} : |z| < r <1 , |\theta - Arg (z)| < 1- |z| \}$ and  $2^j Q(z)= \{r e^{i \theta} : 0 < 1-r < 2^j (1- |z|) , |\theta - \Arg (z)|< 2^j (1 - |z|) \}$

Next, again by Lemma~\ref{intjn}~(c), $\log_2|J|^{-1}\asymp \log_2|J_\pm|^{-1}$. Hence, the densities of zeros $z_s\in Q(z)$ are bounded by $t_{k(\eta)}$, 
$$
\frac1{1-|z|}\sum_{z_s\in Q(z)} (1-|z_s|)\le \sum_{1\le k\le k(\eta)}t_k\le C_6(\eta).
$$
Similarly, 
\begin{multline*}
\frac1{1-|z|}\sum_{z_s\in 2^jQ(z)} (1-|z_s|)\le 2^j\sum_{1\le k\le k(\eta)+j}t_k \\ \le 2^j(C_6(\eta)+jt_{k(\eta)}+j(j+1)/2), 
\end{multline*}
and
$$
\sum_{j=1}^{\infty} \frac{1}{2^{2j} (1-|z|) }\sum_{z_s\in 2^j Q(z) } (1-|z_s|) \le C_7(\eta).
$$
\end{proof}

\begin{proof}[Proof of Corollary~\ref{regular}]
(a) Fix $0 < \varepsilon < 1$ and consider $A(\varepsilon) = \{ z \in \mathbb D : |S_\mu(z)| < \varepsilon \}$. 
Let $\mathcal L_n$ be the collection of dyadic arcs $J\in\mathcal D_n$ such that $\mu (J) > 10 |J| \log {\varepsilon}^{-1}$ and let $\mathcal L=\cup_{n\ge 0}\mathcal L_n$. 
%Given a point $z \in \mathbb D$ let $J(z)$ be the arc of the unit circle centered at $z/|z|$ of length $2(1-|z|)$. 
Since $\log|S_\mu(z)|^{-1}>\mu(J)/(10|J|)$ for $z\in T(J)$, we deduce that %Since $\log|S_\mu(z |^{-1}>\mu(J)/(10|J|)$ for $z\in T(J)$, we
$$
 \int_{A(\varepsilon)} \frac{dA(z)}{1-|z|^2} \ge C \sum_{J\in\mathcal L} |J| =  C \sum_n 2^{-n} a_n,
$$
where $a_n$ is the number of dyadic arcs in the collection $\mathcal L_n$. Since $ \mu$ is singular, there exists $n_0 = n_0 (\varepsilon) >0$ such that for $n \ge n_0$, one has
$$
\sum_{J\in\mathcal L_n} \mu (J) \ge \frac{1}{2} \mu(\mathbb T).
$$
We deduce that $a_n w(2^{-n}) \ge \mu (\mathbb T) / 2$. Thus, 
$$
\int_{A(\varepsilon)} \frac{dA(z)}{1-|z|^2}  \ge C \mu (\mathbb T) \sum_{n \ge n_0} \frac{2^{-n}}{w(2^{-n})},
$$
which finishes the proof of (a).

(b) We may assume that $w(1)=1$. Set $n_1 =0$ and for $k=2,3,\ldots$ let $n_k$ be the smallest positive integer such that $w(2^{-n_k}) < 2^{-k +1}$. Since $2^{-k+1} \le w(2^{-n_k + 1}) < 2 w(2^{-n_k})$, we have $2^{-k} < w(2^{- n_k}) < 2^{-k +1}$. Hence for any $k=1,2 \ldots$ and any integer $n$ with $n_k \le n < n_{k+1}$ we have 
$2^{-k}\le w(2^{-n})< 2^{-k +1}$.
Let %$\mathcal L_1=\{\mathbb T\}$ and let 
$\mathcal L_k=\mathcal D_{n_k}$,  
%be the collection of dyadic arcs of normalised length $ 2^{-n_k}$, for 
$k\ge 1$.
The measure $\mu $ will be defined by prescribing inductively its mass $\mu (J)$ over any dyadic arc $J \in \cup_k \mathcal L_k$. Define $\mu(\mathbb T)=1$.
Assume that $\mu (J)$ has been defined for any arc $J\in\mathcal L_k$ and we will define the mass of $\mu$ over dyadic arcs of $\mathcal L_{k+1}$. Fix $J\in\mathcal L_k$ and let 
$\mathcal G(J)$ be the family of arcs in $\mathcal L_{k+1}$ contained in $J$. If $\mu (J)=0$, define $\mu (L) =0 $ for any dyadic arc $L \in \mathcal G(J)$. If $\mu (J) >0$, pick two (arbitrary) arcs $J_i \in \mathcal G(J)$, $i=1,2$, define $\mu (J_i) = \mu (J) / 2$ for $i=1,2$  and $\mu (L) =0 $ for any other $L \in \mathcal G(J)$ with $L \neq J_i$, $i=1,2$. In other words, in each dyadic arc of generation $n_k$ of positive measure $\mu$, this measure distributes its mass among two (arbitrary) arcs of generation $n_{k+1}$ and gives no mass to the others.
Let $J$ be a dyadic arc of the unit circle with $2^{- n_{k+1}} < |J| \le 2^{- n_k}$ for a certain integer $k=k(J)$. By construction, $\mu(J) \le 2^{-k} \le w(|J|)$. Since any arc is contained in 
the union of two dyadic arcs of comparable length, there exists a constant $C>0$ such that $\mu (J) < C w (|J|)$ for any arc $J \subset\mathbb T$. 
Next we will show that the support of $\mu$  has finite entropy. 
For any integer $k\ge 1$ there are $2^{k-1}$ arcs $J$ of $\mathcal L_k$ with $\mu (J) >0$. Similarly if $n_k< n\le n_{k+1}$, then there are at most $2^k$ dyadic arcs $J$ of normalised length 
$2^{-n}$ with $\mu(J)>0$. Then
\begin{multline*}
\sum_{J \in \mathcal D,\, \mu(J) >0} |J| \le  1+\sum_{k=1}^\infty \sum_{n=n_k+1}^{n_{k+1}} 2^{-n} 2^{k} \\ \le 1+ \sum_{k=1}^\infty 2^k 2^{-n_k} \le  1+2 \sum_{k=1}^\infty \frac{ 2^{-n_k}}{w(2^{-n_k})}\le  1+2 \sum_{n\ge 0} \frac{ 2^{-n}}{w(2^{-n})}.
\end{multline*}
Applying Lemma~\ref{L1} we deduce that the support of $\mu$ has finite entropy  and zero Lebesgue measure. By Theorem~\ref{thment}, $S_\mu$ is wepable.
\end{proof}

\begin{remark} Let $\mu$ be a positive singular measure in the Zygmund class, that is, there exists a constant $C>0$ such that for any pair of contiguous arcs $J, J' \subset\mathbb T$ of the same length one has
$$
|\mu (J)-\mu (J')| \le C|J|.
$$
As in Corollary~\ref{regular}, consider  $w(t) = \sup \mu(J)$, where the supremum is taken over all arcs $J \subset\mathbb T$ with $|J|= t$. Then there exists a constant $C>0$ such that $w(2^{-n}) \le Cn 2^{-n}$ for any positive integer $n$ and Corollary~\ref{regular} gives that $S_\mu$ is non wepable.
\end{remark}

\section{Atomic Measures and Easily Wepable Singular Functions}
\label{new}

We start with a modification of a construction from the proof of Proposition~6 in \cite{Bo}.

\begin{lemma}
\label{kl6}
Given a large $A$, there exists $m\gg 1$ satisfying the following property: if $J\in\mathcal D_k$, $J$ is the union of $2^m$ arcs $J_s\in \mathcal D_{k+m}$, $x_s\in J_s$, $0\le s<2^m$,  
then there exists a set $Y\subset(x_s)_{0\le s<2^m}$, $\card Y\lesssim 2^m/A^2$, such that if 
$$
\mu=\sum_{y\in Y}c_y\delta_y
$$
with $10\cdot 2^{-k-m}A\le c_y\le 20\cdot 2^{-k-m}A$ (and we have $\|\mu\|\lesssim |J|/A$), 
if $\Lambda$ is a subset of $\mathbb D$ such that
$$
P[\mu](z(I))\ge A \implies \Lambda\cap T(I)\not =\varnothing, \qquad I\in \mathcal D,
$$
and if $B_\Lambda$ is the corresponding Blaschke product, then
$$
\log|B_\Lambda(z(J))|\lesssim -A^2.
$$
\end{lemma}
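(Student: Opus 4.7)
The idea is to concentrate all atoms inside a single dyadic sub-arc $I_0\subset J$ whose length is calibrated so that the mass budget $\|\mu\|\asymp |J|/A$ produces a uniform Poisson lower bound of size $A$ throughout $I_0$, thereby forcing $\Lambda$ to contain a zero in $T(I)$ for every dyadic sub-arc $I\subset I_0$.

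Set $j_0:=\lceil 2\log_2 A\rceil$, so that $2^{-j_0}\asymp 1/A^2$; choose $m\ge CA^4$ with $C$ a sufficiently large absolute constant, and let $I_0\in\mathcal D_{k+j_0}$ be any sub-arc of $J$. Put
$$
Y:=\{x_s:J_s\subset I_0\},\qquad \card Y=2^{m-j_0}\le 2^m/A^2.
$$
For any admissible coefficients $(c_y)_{y\in Y}$ the measure $\mu=\sum_{y\in Y}c_y\delta_y$ satisfies $\|\mu\|\le 20\cdot 2^{m-j_0}\cdot 2^{-k-m}A\le 20|J|/A$.

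The core step is to show that $P[\mu](z(I))\ge A$ for every dyadic arc $I\subset I_0$ at scale $j\in[j_0,m]$. Such an $I$ contains exactly $2^{m-j}$ of the arcs $J_s$, each carrying an atom, so $\mu(I)\ge 10A|I|$. A direct Poisson kernel estimate, using $1-|z(I)|\asymp|I|$ and $|1-\bar yz(I)|\asymp|I|$ for $y\in I$, then yields $P[\mu](z(I))\ge c_0\mu(I)/|I|\ge A$ with an explicit absolute constant $c_0$. By the hypothesis on $\Lambda$ this forces a zero $\lambda_I\in\Lambda\cap T(I)$, and different $I$'s give distinct $\lambda_I$'s since the boxes $T(I)$ are pairwise disjoint.

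For $\lambda_I\in T(I)$ with $I\subset I_0\subset J$ a direct computation gives
$$
\frac{(1-|\lambda_I|^2)(1-|z(J)|^2)}{|1-\overline{\lambda_I}z(J)|^2}\asymp \frac{|I|}{|J|}.
$$
Combining with \eqref{kl8} and summing over all such $I$,
\begin{multline*}
-\log|B_\Lambda(z(J))|\gtrsim \sum_{j=j_0}^{m}\sum_{\substack{I\in\mathcal D_{k+j}\\ I\subset I_0}}\frac{|I|}{|J|}\\ =\sum_{j=j_0}^{m}2^{j-j_0}\cdot 2^{-j}=(m-j_0+1)2^{-j_0}\gtrsim \frac{m}{A^2},
\end{multline*}
which exceeds $A^2$ once $m\gtrsim A^4$, as required.

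The main obstacle is pinning down the absolute constants in the implication $\mu(I)\ge 10A|I|\Rightarrow P[\mu](z(I))\ge A$ uniformly over the full range of scales $j\in[j_0,m]$, and verifying $|1-\overline{\lambda_I}z(J)|\asymp|J|$ uniformly for $\lambda_I\in T(I)$, $I\subset I_0$; this is what dictates the eventual choice of the constant $C$ and forces $m$ to grow polynomially in $A$.
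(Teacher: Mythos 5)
Your proof is correct, and it takes a genuinely simpler route than the paper's. The paper builds a ``comb'': it fixes $q\ge A^2$, $n=q^2$, takes $m$ with $2^{m-1}\le q2^{2n}<2^m$, and places $2^n$ widely separated clusters of $2^n$ consecutive atoms inside $J$; the Poisson integral is then $\ge A$ on the towers of $n$ dyadic scales sitting over each cluster, and the forced zeros yield $-\log|B_\Lambda(z(J))|\gtrsim (n+1)2^{2n-m}\cdot 2^n\asymp q\ge A^2$. You instead compress all the mass into a single dyadic sub-arc $I_0\in\mathcal D_{k+j_0}$ with $2^{-j_0}\asymp A^{-2}$, so that $\mu(I)\ge 10A|I|$, hence $P[\mu](z(I))\gtrsim A$, at every dyadic scale $j\in[j_0,m]$ inside $I_0$; the forced zeros then give $-\log|B_\Lambda(z(J))|\gtrsim\sum_{j=j_0}^m 2^{-j_0}\asymp (m-j_0)A^{-2}$, and choosing $m\gtrsim A^4$ finishes. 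Both constructions end up demanding $m\asymp A^4$ and $\card Y\asymp 2^m/A^2$, so nothing is lost quantitatively; the single-cluster version is shorter because you only need to sum over the nested tower of scales inside $I_0$ rather than over both scales and clusters, and the Poisson lower bound at $z(I)$ for $I\subset I_0$ is immediate from $\mu(I)\ge 10A|I|$ together with the elementary bound $P[\delta_y](z(I))\gtrsim |I|^{-1}$ for $y\in I$ (valid with a fixed constant once $|I|\lesssim A^{-2}$ is small). Your worry about the absolute constant in $\mu(I)\ge 10A|I|\Rightarrow P[\mu](z(I))\ge A$ is legitimate but harmless: the kernel bound gives a universal $c_0$, and if $10c_0<1$ one simply enlarges the ``$10$'' in the lemma's mass window (the lemma is stated with that freedom in mind, and the subsequent applications only use the factor-two window $[10,20]$, not the absolute value $10$). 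The estimate $|1-\overline{\lambda_I}z(J)|\asymp|J|$ is likewise uniform since $\lambda_I\in Q(J)$ and $1-|\lambda_I|\le|J|$, so there is no hidden dependence on the scale $j$.
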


\begin{proof} Let $J=[0,2\pi \cdot2^{-k})$. Fix $q\in\mathbb N$, $q\ge A^2$, $n=q^2$, and $m\ge 1$ such that $2^{m-1}\le q2^{2n}< 2^m$ and define 
$$
Y=\{x_s:s=jq2^n+\ell, \, 0\le j<2^n,\, 0\le \ell <2^n\}.
$$
Then $\|\mu\|\asymp 2^{-k}A/q\lesssim |J|/A$, and 
\begin{multline*}
2^{-k-m}\le 1-r\le 2^{n-k-m} \,\,\&\,\, 0\le \theta-jq2^{n-k-m}\le 2^{n-k-m}\\ \implies P[\mu](re^{2\pi i\theta})\ge A.
\end{multline*}
Hence, for every $I\in\cup_{k+m-n\le p\le k+m}\mathcal D_p$ such that 
$$
I\subset \cup_{0\le j<2^n}[2\pi \cdot jq2^{n-k-m},2\pi \cdot(jq+1)2^{n-k-m}]
$$
we have
$$
\Lambda\cap T(I)\not =\varnothing.
$$
Thus, applying \eqref{kl8},
$$
\log|B_\Lambda(z(J))|\lesssim -q.
$$
\end{proof}

\begin{proof}[Proof of Theorem~\ref{thmatom}]
$(a)\Rightarrow(b)$ If \eqref{kl5} does not hold, then we can choose a sequence of groups $(b_{s_n},\ldots,b_{s_n+n})_{n\ge 1}$ such that $b_{s_n}\le 2b_{s_n+n}$. 
By Lemma~\ref{kl6}, passing to a subsequence of $(s_n)$ denoted also by $(s_n)$ we construct a sequence of dyadic arcs $J_n$ and measures 
$$
\mu_n=\sum_{s_n\le y<s_n+2^{m_n}} b_y \delta_{x_y} 
$$
such that 
$$
\supp \mu_n\subset J_n,\quad \|\mu_n\|=o(|J_n|), \quad \dist(J_n,J_{n'})\gtrsim \max(|J_n|,|J_{n'}|).
$$
Furthermore, if the sets $\Lambda_n$ satisfy the property 
\begin{equation}
\label{kl11}
P[\mu_n](z(I))\ge n \implies \Lambda_n\cap T(I)\not =\varnothing, \qquad I\in \mathcal D,
\end{equation}
then the corresponding Blaschke products $B_{\Lambda_n}$ satisfy the estimate
$$
|B_{\Lambda_n}(z(J_n))|\le \exp(-n^2).
$$
Finally, we take $x\in\mathbb T\setminus \cup_n J_n$ and set 
$$
\mu=\sum_{n\ge 1} \mu_n+\Bigl(\sum_{s\ge 1}b_s-\sum_{n\ge 1}\|\mu_n\|\Bigr)\delta_x.
$$

Suppose that $B_\Lambda$ is a Blaschke product with zero set $\Lambda$ such that $S_\mu B_\Lambda$ has the WEP. Then for every $n\ge n_0$, 
the set $\Lambda\cap Q(J_n)$ satisfies the property \eqref{kl11}, and hence,
$$
|B_{\Lambda}(z(J_n))|\le \exp(-n^2).
$$
Therefore, $B_{\Lambda}$ should have a zero in $T(J_n)$, $n\ge n_1$. However, 
$$
P[\mu](z(J_n))\to 0,\qquad n\to\infty.
$$
Thus, $S_\mu$ is not easily wepable.

$(b)\Rightarrow(a)$ Given $z,w\in\mathbb D$, set
$$
\beta(z,w)=\log_2\frac{1}{1-\rho(z,w)}.
$$
Let $J_1,J_2\in\mathcal D$, $2^{-n}=|J_1|\le|J_2|=2^{-m}$, $J\in\mathcal D$, $J_1\subset J$, $|J|=|J_2|$, 
$J=[k2^{-m},(k+1)2^{-m}]$, $J_2=[k'2^{-m},(k'+1)2^{-m}]$. Then
$$
\beta_{J_1,J_2}\overset{\rm def}{=}\beta\bigl(z(J_1),z(J_2)\bigr)=n-m+2\log_2(|k-k'|+1)+O(1).
$$
Furthermore, if $T(J_1)\cap T(J_2)=\varnothing$, then 
$$
\min_{z_1\in T(J_1),\,z_2\in T(J_2)}\beta(z,w)=\beta_{J_1,J_2}+O(1).
$$

Let $\phi$ be an increasing subadditive function on $(0,+\infty)$ such that $\phi(x)=x$, $0<x\le 1$, $\phi(x)\asymp \log x$, $x\to\infty$.

Let $\mu=\sum_{s\ge 1}b_s\delta_{x_s}$. For every $J\in\mathcal D$ we set $\lambda(J)=P[\mu](z(J))$. Harnack's principle gives us a Lipschitz type estimate
\begin{equation}\label{kl1}
|\phi(\lambda(J_1))-\phi(\lambda(J_2))|\lesssim \beta_{J_1,J_2}.
\end{equation}
Now, for every $J\in\mathcal D$ we denote by $k_J$ the integer part of $\phi(\lambda(J))$, and 
choose $k_J$ points $z_{J,1},\ldots,z_{J,k_J}$ uniformly distributed in $T(J)$. 
Let $B$ be the Blaschke product with zeros in the points $z_{J,1},\ldots,z_{J,k_J}$, $J\in\mathcal D$. To check that $BS_\mu$ has the WEP 
(and incidentally that $B$ exists) we need only to verify that for every $J\in\mathcal D$ we have
\begin{equation}\label{kl2}
\sum_{I\in\mathcal D\setminus\{J\}} 2^{-\beta_{J,I}}\phi(\lambda(I))\lesssim \max(\lambda(J),1).
\end{equation}

Fix $J\in\mathcal D$. Let $|J|=2^{-n}$. By \eqref{kl1}, we have
\begin{multline*}
\sum_{I\in\mathcal D,\, |I|> |J|} 2^{-\beta_{J,I}}\phi(\lambda(I))\lesssim\sum_{I\in\mathcal D,\, |I|> |J|} 2^{-\beta_{J,I}}
(\phi(\lambda(J))+\beta_{J,I})\\ \lesssim
\sum_{0\le k<n}\sum_{s=1}^{2^k}2^{k-n}s^{-2}(\phi(\lambda(J))+n-k+\log s+O(1))\lesssim \max(\lambda(J),1).
\end{multline*}

Next, we set $\mu'=\chi_{10J}\mu$, $\mu''=\mu-\mu'$, and define $\lambda'(I)=P[\mu'](z(I))$, $\lambda''(I)=P[\mu''](z(I))$, $I\in\mathcal D$.
To prove \eqref{kl2}, we need only to check that
\begin{align}
\sum_{I\in\mathcal D\setminus\{J\},\, |I|\le |J|} 2^{-\beta_{J,I}}\phi(\lambda'(I))&\lesssim \max(\lambda'(J),1),\notag\\
\sum_{I\in\mathcal D\setminus\{J\},\, |I|\le |J|} 2^{-\beta_{J,I}}\phi(\lambda''(I))&\lesssim \max(\lambda''(J),1)\label{kl3}.
\end{align}

We have $\mu'=\sum_{s\in N'}b_s\delta_{x_s}$, and we set $a=\max\{b_s:s\in N'\}$. (If $\mu'=0$, we just pass to $\mu''$.)
By \eqref{kl5}, $P[\mu'](z(J))\asymp 2^na$, and hence, 
$$
\sum_{I\in\mathcal D\setminus\{J\},\, |I|\le |J|} 2^{-\beta_{J,I}}\phi(\lambda'(I))\lesssim
\sum_{s\in N'} \sum_{I\in\mathcal D\setminus\{J\},\, |I|\le |J|}  2^{-\beta_{J,I}}\phi(b_s  P[\delta_{x_s}](z(I))).
$$
Fix for a moment $s\in N'$. Without loss of generality, $x_s=0$, and for $m\ge n$, $I=[2\pi \cdot t2^{-m},2\pi \cdot (t+1)2^{-m})$, $I\neq J$, we have 
$$
\beta_{J,I}\ge n-m+O(1),\qquad P[\delta_{x_s}](z(I))\lesssim 2^mt^{-2}.
$$
Hence, 
\begin{gather*}
\sum_{I\in\mathcal D\setminus\{J\},\, |I|\le |J|} 2^{-\beta_{J,I}}\phi(\lambda'(I))\lesssim
\sum_{s\in N'}\sum_{m\ge n}\sum_{t\ge 1} 2^{n-m}\phi(b_s2^m t^{-2})\\=
\sum_{s\in N'}\sum_{m\ge n}\sum_{t^2\le 2^mb_s} 2^{n-m}\phi(2^mb_s t^{-2})+
\sum_{s\in N'}\sum_{m\ge n}\sum_{t^2> 2^mb_s} 2^{n-m}\phi(2^mb_s t^{-2})\\ \lesssim
\sum_{s\in N'}\sum_{m\ge n}2^{m/2}\sqrt{b_s} 2^{n-m}(\phi(2^mb_s)+1)\\ \lesssim  
\sum_{s\in N'}\sum_{m\ge n} 2^{n-m/2}\sqrt{b_s}\max(m+\log b_s,1)\\ \lesssim \sum_{s\in N'} 2^{n/2}\sqrt{b_s} \max(n+\log b_s,1)\\ \lesssim
2^{n/2}\sqrt{a}\max(n+\log a,1)\lesssim \max(2^na,1)\lesssim \max(\lambda'(J),1).
\end{gather*}

We have $\mu''=\sum_{s\in N''}b_s\delta_{x_s}$. To prove \eqref{kl3}, we need to verify that 
\begin{multline}\label{kl4}
\sum_{s\in N''}\sum_{I\in\mathcal D\setminus\{J\},\, |I|\le |J|} 2^{-\beta_{J,I}}\phi(b_s  P[\delta_{x_s}](z(I)))\\  \lesssim   1+\sum_{s\in N''}P[b_s\delta_{x_s}](z(J)).%,\quad s\in N''.
\end{multline}

Fix $s\in N''$ and choose $r=r(s)\ge 2$ such that 
$$
2r|J|=\dist(x_s,J).
$$
Then
\begin{gather*}
\sum_{I\in\mathcal D\setminus\{J\},\, |I|\le |J|} 2^{-\beta_{J,I}}\phi(b_s  P[\delta_{x_s}](z(I)))\\=
\sum_{I\in\mathcal D\setminus\{J\},\, |I|\le |J|,\,\dist(I,J)\le r|J|} 2^{-\beta_{J,I}}\phi(b_s  P[\delta_{x_s}](z(I)))\\+
\sum_{I\in\mathcal D\setminus\{J\},\, |I|\le |J|,\,\dist(I,J)> r|J|} 2^{-\beta_{J,I}}\phi(b_s  P[\delta_{x_s}](z(I)))=A_1+A_2.
\end{gather*}
Next, we can assume that $J=[0,2^{-n})$ and for $I=[2\pi\cdot t2^{-m},2\pi\cdot (t+1)2^{-m})$, $m\ge n$, we have $\beta_{J,I}=m-n+2\log_2(1+t2^{n-m})+O(1)$. Hence, 
\begin{multline*}
A_1\lesssim \sum_{m\ge n}\sum_{t\ge 1} 2^{n-m}(1+t2^{n-m})^{-2}\phi(b_s2^{-m} r^{-2}2^{2n})\\ 
\lesssim \sum_{m\ge n}\phi(2^{2n-m}b_s r^{-2})\lesssim 2^nb_sr^{-2}\asymp P[b_s\delta_{x_s}](z(J)).
\end{multline*}
Furthermore,
\begin{gather*}
A_2\lesssim \sum_{m\ge n}\sum_{t\ge 1} 2^{n-m}r^{-2}\phi(b_s2^{m} t^{-2})\\=
\sum_{m\ge n}\sum_{t^2\le 2^mb_s} 2^{n-m}r^{-2}\phi(2^mb_s t^{-2})+
\sum_{m\ge n}\sum_{t^2>2^mb_s} 2^{n-m}r^{-2}\phi(2^mb_s t^{-2})\\ \lesssim 
\sum_{m\ge n} 2^{m/2}\sqrt{b_s} 2^{n-m}r^{-2}\phi(2^mb_s)+
\sum_{m\ge n} 2^{n-m}r^{-2} 2^m b_s2^{-m/2}b_s^{-1/2}\\ \lesssim 2^{n/2}\sqrt{b_s}r^{-2}  \max(n+\log b_s,1)+ 2^{n/2}\sqrt{b_s}r^{-2}.
\end{gather*}
If $b_s\ge 2^{-n}$, then 
$$
A_2\lesssim 2^nb_sr^{-2}\asymp P[b_s\delta_{x_s}](z(J)).
$$
Thus, to complete the proof, we need only to estimate 
$$
H=\sum_{s\in N'',\,b_s<2^{-n}}2^{n/2}\sqrt{b_s}r(s)^{-2}. 
$$
By \eqref{kl5}, 
$$
\sum_{b_s<2^{-n}}\sqrt{b_s}\lesssim 2^{-n/2}.
$$
Hence,
$$
H\lesssim 1,
$$
and \eqref{kl4} follows.
\end{proof}

\section{Porous Sets and Easily Wepable Singular Functions}
\label{proofpor}

\begin{proof}[Proof of Theorem~\ref{thmpor}. $(b)\Rightarrow(a)$]
Suppose that $E$ is porous and set $u= P[\mu]$.
Set
$$
\mathcal G_k := \{ I\in\mathcal D: k^2 < u(z(I)) \le (k+1)^2 \},\qquad k\ge 1.
$$
We claim that there
exists a constant $C>0$  such that
\begin{equation}
\label{mainest}
\sum_{k\ge 1} k \sum_{I\in \mathcal G_k} \frac{|I||J|}{|1- \overline{z(J)} z(I)|^2}
\le C u(z(J)),\qquad J\in\mathcal D.
\end{equation}
Indeed, by Lemma~\ref{lempor}~(c), 
\begin{multline*}
\sum_{k\ge 1} k \sum_{I\in \mathcal G_k} \frac{|I||J|}{|1- \overline{z(J)} z(I)|^2}
\\ \le
\sum_{k\ge 1}  \sum_{I\in\mathcal D,\, k^2 < u(z(I))} \frac{|I||J|}{|1- \overline{z(J)} z(I)|^2}
\le
C \sum_{k\ge 1} \frac{u(z(J))}{k^2}.
\end{multline*}
Now for each integer $k\ge 1$ and each $I \in \mathcal G_k$ we consider the set $\Lambda(I)$ consisting of $k$ points, uniformly distributed in $T(I)$. Let 
$\Lambda:=\cup_{k\ge 1} \cup_{I\in \mathcal G_k}\Lambda(I)$.
Taking $J= \partial \D$ in \eqref{mainest}, we see that $\Lambda$ is a Blaschke sequence. Let
$B$ be the Blaschke product with the zero set $\Lambda$.

Notice that the zeros of $B$ are restricted to the sets $T(I)$ where the
modulus of $S_\mu$ is small, %(and indeed by choosing $N_1$ larger we could make it as
%small as we please)
so if we prove that $BS_\mu$ has the WEP, we will have shown that
$S_\mu$ is easily wepable.

Furthermore, the zeros of $B$ are more and more densely packed as $k\to\infty$, i.e. as the
modulus of $S_\mu$ gets smaller; thus for any $\eps >0$ there exists  $\eta=\eta(\eps)>0$ such that
$|S_\mu(z)|>\eta$ whenever $\rho(z, \Lambda) > \eps$. Thus, to prove that
$BS_\mu$ has the WEP, we only need to show that $\inf\{ |B(z)|: \rho(z, \Lambda) > \eps\} >0$.
Fix $z$ such that $\rho(z, \Lambda) > \eps$ and let $J\in\mathcal D$ be such that $z \in T(J)$.
Then by Harnack's inequality, 
\begin{equation}
\label{kl18}
u(z(J)) \le C \log \eta^{-1}.
\end{equation}
By \eqref{kl9}, 
\begin{gather*}
\log |B(z)|^{-1} \le
C(\eps) \sum_{\lambda \in \Lambda} \frac{(1-|\lambda|^2)(1-|z|^2)}{|1-\bar \lambda z|^2}\\
=C(\eps) \sum_{k=1}^\infty \sum_{I\in \mathcal G_k} \, \sum_{\lambda \in \Lambda\cap T(I)}
 \frac{(1-|\lambda|^2)(1-|z|^2)}{|1-\bar \lambda z|^2} \\
 \le C(\eps) \sum_{k=1}^\infty k \sum_{I\in \mathcal G_k} \frac{|I||J|}{|1- \overline{z(J)} z(I)|^2} .
\end{gather*}
Applying \eqref{mainest} and \eqref{kl18}, we conclude that $\log |B(z)|^{-1}\le C(\eps) \log \eta^{-1}$.
\end{proof}

\begin{proof}[Proof of Theorem~\ref{thmpor}. $(a)\Rightarrow(b)$]
Assume now that $E$ is not porous. We can find a sequence of arcs $J_n \in \mathcal D_{k_n}$, $k_n\to\infty$, and 
a sequence of numbers $M_n \to\infty$, $n\to\infty$, 
such that every $J\in \mathcal D_{k_n+M_n}$, $J \subset J_n$, meets $E$. Passing to a subsequence and using Lemma~\ref{kl6}
we obtain a sequence of arcs $J_n\in \mathcal D_{k_n}$, $k_n\to\infty$, and a sequence of measures $\mu_n$ such that $\supp \mu_n\subset J_n\cap E$, $\|\mu_n\|=o(|J_n|)$, and  
$\dist(J_n,J_{n'})\gtrsim \max(|J_n|,|J_{n'}|)$.
Furthermore, if sets $\Lambda_n\subset\mathbb D$ satisfy the property 
\begin{equation*}
P[\mu_n](z(I))\ge n \implies \Lambda_n\cap T(I)\not =\varnothing, \qquad I\in \mathcal D,
\end{equation*}
then the corresponding Blaschke products $B_{\Lambda_n}$ satisfy the estimate
$$
|B_{\Lambda_n}(z(J_n))|\le \exp(-n^2).
$$
Let 
$$
\mu=\sum_{n\ge 1} \mu_n.
$$
To conclude that $S_\mu$ is not easily wepable we use the same argument as in the part $(a)\Rightarrow(b)$ of the proof of Theorem~\ref{thmatom}.
\end{proof}

\section{Corona type constants}
\label{proofcsts}

First, we make an easy remark:
$c_n (\delta, I) \ge c_{n-1} (\delta, I)$, thus $\delta_n(I) \ge \delta_{n-1}(I)$.

Indeed, suppose that $\gamma < c_{n-1} (\delta, I)$, then there are $(f_1,\dots,f_{n-1})=:f$
such that $\delta^2 \le \sum_{j=1}^{n-1} |f_j(\lambda)|^2 \le
\|f\|^2_{\infty,n} \le 1$, and that
$$
\gamma < \inf\{ \|g\|_{\infty,n-1} : \exists h\in H^\infty :
\sum_{j=1}^{n-1} g_j f_j + h I \equiv 1\}.
$$
Given $\tilde f:=(f_1,\dots,f_{n-1},0)$, for every $g \in (H^\infty)^n$ we obtain that 
$\sum_{j=1}^n g_j f_j=\sum_{j=1}^{n-1} g_j f_j$, so that 
$\chi_I (\tilde f) \ge \gamma$. Since $\tilde f$ fulfils the condition
to be a candidate in the supremum, we obtain that 
$c_n (\delta, I)\ge \gamma$, q.e.d.

\begin{lemma}
\label{lowerest}
For any $n$, $\delta_n(I) \le \tilde \delta (I)$.
\end{lemma}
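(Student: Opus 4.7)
The plan is to reduce the lemma to the vector-valued Corona theorem applied to the $(n+1)$-tuple $(f_1,\dots,f_n,I)$. Fix $\delta>\tilde\delta(I)$ and choose $\eps\in(\tilde\delta(I),\delta)$, so that $\eta:=\eta_I(\eps)>0$. Let $f=(f_1,\dots,f_n)$ satisfy $\|f\|_{\infty,n}\le 1$ and $\sum_j|f_j(\lambda)|^2\ge \delta^2$ for every $\lambda\in Z(I)$. The goal is to show that $\sum_j|f_j(z)|^2+|I(z)|^2\ge c^2>0$ uniformly in $z\in\D$ and in $f$, where $c$ depends only on $\delta,\eps,\eta$; the Corona theorem then yields $g_1,\dots,g_{n+1}\in H^\infty$ with $\sum_{j\le n}g_jf_j+g_{n+1}I\equiv 1$ and $\|g\|_{\infty,n}\le M(c,n+1)$, bounding $\chi_I(f)\le M$ and giving $c_n(\delta,I)<\infty$.

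When $\rho(z,Z(I))>\eps$ the defining property of $\eta_I(\eps)$ gives $|I(z)|\ge\eta$. For $z$ with $\rho(z,\lambda)\le\eps$ for some $\lambda\in Z(I)$, I would not apply Schwarz--Pick to each $f_j$ separately (which would yield only the weaker $\sum|f_j(z)|^2\ge\delta^2-O(n\eps)$), but instead linearize: consider the scalar function
$$
h(w):=\sum_{j=1}^n \frac{\overline{f_j(\lambda)}}{\|f(\lambda)\|_{\ell^2}}\,f_j(w)\in H^\infty,
$$
for which $|h|\le\|f\|_{\infty,n}\le 1$ and $h(\lambda)=\|f(\lambda)\|_{\ell^2}\ge\delta$ by Cauchy--Schwarz. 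The scalar Schwarz--Pick lemma applied to $h\colon\D\to\overline{\D}$ gives $\rho(h(z),h(\lambda))\le\eps$ and, since $h(\lambda)$ is real with $h(\lambda)\ge\delta>\eps$, the explicit description of the pseudohyperbolic disc yields $|h(z)|\ge (h(\lambda)-\eps)/(1-h(\lambda)\eps)\ge\delta-\eps$. Another application of Cauchy--Schwarz gives $|h(z)|\le\|f(z)\|_{\ell^2}$, so $\sum_j|f_j(z)|^2\ge(\delta-\eps)^2$. Combining both regions, $\inf_{z\in\D}\bigl(\sum_j|f_j(z)|^2+|I(z)|^2\bigr)\ge\min((\delta-\eps)^2,\eta^2)>0$, as required.

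The key step, and the one I expect to be the main obstacle, is the linearization via the scalar projection onto the direction of $f(\lambda)$. It converts a vector pseudohyperbolic continuity question into a scalar Schwarz--Pick estimate, and requires only $\eps<\delta$, which is always achievable once $\delta>\tilde\delta(I)$. A naive componentwise bound on $\|f(z)\|_{\ell^2}^2$ would force a condition of the form $\tilde\delta(I)<\delta^2/(Cn)$ and would not reach the sharp inequality $\delta_n(I)\le\tilde\delta(I)$. Once the uniform lower bound is in place, the vector Corona theorem (applied to $n+1$ data) concludes, and letting $\delta\downarrow\tilde\delta(I)$ gives $\delta_n(I)\le\tilde\delta(I)$.
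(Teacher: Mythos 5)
Your proof is correct and follows essentially the same route as the paper: the function you call $h$ is exactly the paper's $\varphi = f\cdot\bar v$ with $v$ the unit vector parallel to $f(\lambda)$, so the linearization step, the scalar Schwarz--Pick estimate, the WEP bound when $\rho(z,Z(I))\ge\eps$, and the final appeal to the vector Corona theorem all match the paper's argument.
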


\begin{proof}
Pick any number $\eps_0 > \tilde \delta (I)$, then choose $\eps_1$ such that
$\eps_0 > \eps_1 >\tilde \delta (I)$.  Suppose that
$f:=(f_1,\dots,f_n) \in (H^\infty)^n$ satisfies the estimates 
$\eps_0^2 \le \inf_k \sum_{\lambda\in Z(I)}^n |f_j(\lambda) |^2$, $\|f\|_{\infty,n}\le 1$.  Take $z\in \D$
such that for some $\lambda\in Z(I)$ we have 
$\rho(z,\lambda) < \eps_1$. Then, applying the Schwarz-Pick
Lemma to the function $\varphi:= f \cdot \bar v$, where $v$ is a unit
vector in $\mathbb C^n$ parallel to $f(\lambda)$, we see that
$$
\Bigl( \sum_{j=1}^n |f_j(z) |^2\Bigr)^{1/2} \ge |\varphi(z)|\ge \frac{\eps_0-\eps_1}{1- \eps_0\eps_1}=: \eps_2.
$$

On the other hand, suppose that $\rho(z,Z(I)) \ge \eps_1$, then
$|I(z)| \ge \eta_I(\eps_1) >0$. Finally,
$$
\inf_{z\in\D} \left(  \sum_{j=1}^n |f_j(z) |^2 + |I(z)|^2 \right) \ge
\min( \eps_2^2, \eta_I(\eps_1)^2).
$$
By Carleson's Corona Theorem, we can find $g \in (H^\infty)^n$, $h\in H^\infty$ with $\|g\|_{\infty,n}\le C(\eps_2,\eta_I(\eps_1))$ 
such that $\sum_{j=1}^{n} g_j f_j + h I\equiv 1$, therefore $c_n(\eps_0,I)<\infty$.
Since this holds for any $\eps_0 > \tilde \delta (I)$, we are done.
\end{proof}

%In order to prove the reciprocal inequality, we need the following folklore Lemma.

%\begin{lemma}
%\label{folk}
%Let $\beta\in(0,1)$. Let $B^\zeta$ be a Blaschke product with zeroes $\{\zeta_n\}$
%chosen in such a way that $(|\zeta_n|)_n$ is strictly increasing and
%$$
%1-|\zeta_n|^2 \le \alpha (1-|\zeta_{n-1}|^2 )
%$$
%where $\alpha \in (0,1)$.

%For any $z\in \D$ let $\zeta(z):=\zeta_{n(z)}$ be chosen so that
%$\rho (|z|, |\zeta(z)|) = \min_{k\in \N} \rho (|z|, |\zeta_k|)$.  Then there exists
%$\alpha(\beta) >0$ such that if $\alpha< \alpha(\beta)$, then for all $z\in \D$,
%$$
%|B^\zeta(z)|\ge\rho(z,\zeta(z))\beta.
%$$
%\end{lemma}

The following will end the proof of Proposition \ref{deltaseq}.

\begin{lemma}
\label{upperest}
$\delta_1(I)\ge \tilde \delta(I)$.
\end{lemma}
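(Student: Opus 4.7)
The plan is to prove the equivalent contrapositive: if $c_1(\delta,I)\le M<\infty$, then $\eta_I(\delta)>0$, so $\tilde\delta(I)\le\delta$; passing to the infimum over such $\delta$ then yields $\delta_1(I)\ge \tilde\delta(I)$.

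Given $z\in\D$ with $\rho(z,Z(I))>\delta$, I take as test function the M\"obius automorphism $f(w):=\varphi_z(w)=(w-z)/(1-\bar z w)$, which satisfies $\|f\|_\infty=1$. For every $\lambda\in Z(I)$ one has $|f(\lambda)|=\rho(\lambda,z)\ge \rho(z,Z(I))>\delta$, so $f$ is admissible in the supremum defining $c_1(\delta,I)$. Therefore $\chi_I(f)\le M$, and for any $\eps>0$ I can choose $g,h\in H^\infty$ with $g\varphi_z+hI=1$ and $\|g\|_\infty\le M+\eps$.

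Evaluating this identity at $z$ kills the first term ($\varphi_z(z)=0$) and forces $h(z)=1/I(z)$; so to lower bound $|I(z)|$ it suffices to upper bound $\|h\|_\infty$. Rewriting $hI=1-g\varphi_z\in H^\infty$ gives $\|hI\|_\infty\le 1+M+\eps$. The crucial observation is that since $I$ is inner one has $|I^*|=1$ almost everywhere on $\partial\D$, so $|(hI)^*|=|h^*|$ a.e.; combined with the standard identity $\|h\|_\infty=\|h^*\|_{L^\infty(\partial\D)}$ for $h\in H^\infty$, this gives the pleasant equality $\|h\|_\infty=\|hI\|_\infty\le 1+M+\eps$. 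This is really the entire nontrivial content of the argument.

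Consequently $1/|I(z)|=|h(z)|\le 1+M+\eps$, and letting $\eps\to 0$ yields $|I(z)|\ge 1/(1+M)$. Since this bound holds uniformly for every $z$ with $\rho(z,Z(I))>\delta$, I conclude $\eta_I(\delta)\ge 1/(1+M)>0$, so $\tilde\delta(I)\le\delta$, as required. The one genuine obstacle to overcome in constructing this proof is spotting the simple but decisive identity $\|h\|_\infty=\|hI\|_\infty$ for $I$ inner; once it is in hand, everything collapses to a one-line evaluation at~$z$ of the Corona equation attached to the single M\"obius factor~$\varphi_z$.
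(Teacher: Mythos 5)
Your proof is correct and takes a genuinely different, and arguably sharper, route than the paper's. The paper argues by contraposition from the other side: given $\eps_0<\tilde\delta(I)$, it builds a single sparse Blaschke product $B$ whose zeros $(\xi_n)$ sit at points with $\rho(\xi_n,Z(I))\ge\eps_1$ and $|I(\xi_n)|\to 0$; this $B$ is an admissible test function, yet evaluating any hypothetical Bezout relation $gB+hI\equiv 1$ at the $\xi_n$ forces $h(\xi_n)I(\xi_n)=1$, impossible for bounded $h$, so $\chi_I(B)=\infty$ and hence $c_1(\eps_0,I)=\infty$. You instead test the Bezout equation at the whole family of M\"obius automorphisms $\varphi_z$, evaluate at $z$, and exploit the fact that $\|hI\|_\infty=\|h\|_\infty$ for $I$ inner to turn the bound $\|1-g\varphi_z\|_\infty\le 1+M$ into a bound on $\|h\|_\infty$. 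This yields the explicit quantitative estimate $\eta_I(\delta)\ge 1/(1+c_1(\delta,I))$, which the paper's qualitative nonexistence argument does not produce. Both proofs are valid; yours is shorter, avoids the auxiliary construction of a thin Blaschke product (and the cited estimate from Garnett's book), and furnishes extra quantitative information, while the paper's construction has the virtue of exhibiting a single concrete test function on which the Corona problem in $H^\infty/IH^\infty$ fails outright.
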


\begin{proof}
Let $\eps_0 < \tilde \delta(I)$. We want to prove that $c_1(\eps_0,I)=\infty$.
Pick $\eps_1$ such that $\eps_0 < \eps_1 < \tilde \delta(I)$.  Then there exists
an infinite sequence $(\zeta_n)_n\subset \D$ such that $\rho(\zeta_n, Z(I))\ge
\eps_1$ and $|I(\zeta_n)| \to 0$.

Choose a subsequence $(\xi_n)$ of this sequence, with
$$
1- \inf_k \prod_{j:j\neq k} \rho (\xi_j,\xi_k)  
$$
so small that the Blaschke product $B$ with zeros $(\xi_n)$ satisfies the property $|B(z)| > \varepsilon_0$ if $\rho(z, Z(B)) > \varepsilon_1$ (see, for instance \cite[p. 395]{Ga}). %See [Ga]. Pick $f=B$. 
Then for any $\lambda\in Z(I)$ we have $|B(\lambda)| \ge \varepsilon_0$. On the other hand, 
%Choose a subsequence of $\{\zeta_n\}$ (which we denote in the same way)
%thin enough so that the conditions of Lemma \ref{folk} are fulfilled
%for $\beta=\eps_0/\eps_1$. Pick $f= B^\zeta$.  Then for any $n$ and any $\lambda$
%such that $I(\lambda)=0$, we have
%$\rho(\lambda, \zeta_n) \ge \eps_1$, so
%$$
%|f(\lambda)| \ge \rho (\lambda, \zeta(\lambda)) \frac{\eps_0}{\eps_1}
%\ge \eps_0,
%$$
%and of course $\|f\|_\infty=1$. On the other hand, 
for %any $n$,
any $g,h \in H^\infty$,
$$
g(\xi_n)B(\xi_n) + h(\xi_n) I(\xi_n) = h(\xi_n) I(\xi_n)
\to 0,\qquad n\to\infty.
$$
This proves that $gf+hI \not \equiv 1$.
\end{proof}

\begin{proof}[Proof of Proposition~\ref{cn}] The argument is anologous to that in the proof of Lemma~\ref{upperest}. Take a strictly increasing function $\psi:(0,1)\to(0,1)$ 
such that $\psi\cdot(\phi+1)\le 1$. Using the above mentioned result from \cite[p. 1199]{Bo} we find a Blaschke product $B$ satisfying the WEP and such that for every $\delta\in(0,1)$ 
there exists $z_\delta\in\mathbb D$ satisfying  
$$
\rho(z_\delta,Z(B))=\delta,\qquad |B(z_\delta)|\le \psi(\delta).
$$
Denote $b_\delta(z)=(z-z_\delta)/(1-\bar z_\delta z)$. We have $\min_{Z(B)}|b_\delta|=\delta$. 
If $g,h \in H^\infty$, $gb_\delta+hB\equiv1$, then 
$$
\|h\|_\infty\ge 1+\phi(\delta),
$$
and hence, 
$$
c_1(\delta,B)\ge \inf_{gb_\delta+hB\equiv1}\|g\|_\infty\ge \phi(\delta),\qquad\delta\in(0,1).
$$
\end{proof}

% \subsection{How to Leave Comments}

\end{document}